\newtheorem{theorem}{Theorem}[section]
\newtheorem{remark}[theorem]{Remark}
\newtheorem{example}[theorem]{Example}
\newtheorem{corollary}[theorem]{Corollary}
\newtheorem{lemma}[theorem]{Lemma}
\newenvironment{proof}[1][Proof]{\textbf{#1.} }{\ \rule{0.5em}{0.5em}}
\begin{document}

\title{Application of Mountain Pass Theorem to superlinear equations with fractional
Laplacian controlled by distributed parameters and boundary data}
\author{Dorota Bors\\{\small Faculty of Mathematics and Computer Science}\\{\small University of Lodz}\\{\small ul. S. Banacha 22, 90-238 \L \'{o}d\'{z}, Poland}\\{\small email: bors@math.uni.lodz.pl}}
\date{}
\maketitle

\begin{abstract}
In the paper we consider a boundary value problem involving a differential
equation with\ the fractional Laplacian $(-\Delta)^{\alpha/2}$ for $\alpha
\in\left(  1,2\right)  $ and some superlinear and subcritical nonlinearity
$G_{z}$ provided with a nonhomogeneous Dirichlet exterior boundary condition.
Some sufficient conditions under which the set of weak solutions to the
boundary value problem is nonempty and depends continuously in the
Painlev\'{e}-Kuratowski sense on distributed parameters and exterior boundary
data are stated. The proofs of the existence results rely on the Mountain Pass
Theorem. The application of the continuity results to some optimal control
problem is also provided. \vspace{0.1cm} \newline\textit{Key words and
phrases.} boundary value problems, fractional Laplacian, stability, mountain
pass theorem. \vspace{0.2cm} \newline2010 \textit{Mathematics Subject
Classification. }34A08, 35A15, 35B30, 93D05.

\end{abstract}

\section{Introduction}

The problems with the fractional Laplacian attracted in recent years a lot of
attention as they naturally arise in various areas of applications. The
fractional Laplacian naturally appears in probabilistic framework as well as
in mathematical finance as infinitesimal generators of stable L\'{e}vy
processes \cite{App, BogByc1, BogByc2, BogBycKul, Val}. One can find the
problems involving the fractional Laplacian in mechanics and in elastostatics,
to mention only, a Signorini obstacle problem originating from linear
elasticity \cite{BerSag, Caf, CafSalSil}. Then concerning fluid mechanics and
hydrodynamics the nonlocal fractional Laplacian appears, for instance, in the
quasi-geostrophic fractional Navier-Stokes equation \cite{CafVas} and in the
hydrodynamic model of the flow in some porous media \cite{BonVaz, NocOtaSal,
Vaz, Vaz2}.

In the paper we consider problems modelled by the differential equation
with\ the fractional Laplacian $(-\Delta)^{\alpha/2}$ and some nonlinearity
$G_{z}$ of the form%
\begin{equation}
(-\Delta)^{\alpha/2}z\left(  x\right)  =G_{z}\left(  x,z\left(  x\right)
,u\left(  x\right)  \right)  \text{ in }\Omega\label{1.1}%
\end{equation}
with the nonhomogeneous Dirichlet exterior condition%
\begin{equation}
z\left(  x\right)  =v\left(  x\right)  \text{ in }\mathbb{R}^{n}%
\backslash\Omega\label{1.2}%
\end{equation}
where $\alpha\in\left(  1,2\right)  $ is fixed, $\Omega\subset\mathbb{R}^{n}$
for $n\geq2$ is a bounded domain with a Lipschitz boundary, $G_{z}$ is the
partial derivative of the function $G$ with respect to $z$ variable which is a
suitable Carath\'{e}odory function, $u:\Omega\rightarrow\mathbb{R}^{m},$
$m\geq1,$ is a distributed parameter, $v:\mathbb{R}^{n}\rightarrow\mathbb{R}$
is boundary data and the fractional Laplace operator $(-\Delta)^{\alpha/2}$ is
defined like, for example, in \cite{SerVal4} as%
\begin{equation}
(-\Delta)^{\alpha/2}z\left(  x\right)  =c\left(  n,\alpha\right)
\int\limits_{\mathbb{R}^{n}}\frac{2z\left(  x\right)  -z\left(  x+y\right)
-z\left(  x-y\right)  }{\left\vert y\right\vert ^{n+\alpha}}%
dy\label{Laplacian_i}%
\end{equation}
where $c\left(  n,\alpha\right)  $ is a positive normalizing constant
depending only on $\alpha$ and $n$ like in \cite{BogBycKul,CabSir1,Vaz2}
defined as $c\left(  n,\alpha\right)  =\Gamma\left(  (n+\alpha)/2\right)
/\left(  \left\vert \Gamma\left(  -\alpha/2\right)  \right\vert \pi
^{n/2}2^{1-\alpha}\right)  =\alpha\Gamma\left(  (n+\alpha)/2\right)  /\left(
\Gamma\left(  1-\alpha/2\right)  \pi^{n/2}2^{2-\alpha}\right)  $, cf. also
other normalizations as in \cite{NezPalVal}. By using variational methods in
an appropriate abstract framework developed by Servadei and Valdinoci
\cite{SerVal2}, first of all, we prove the existence results to $\left(
\ref{1.1}\right)  -\left(  \ref{1.2}\right)  $ for a certain class of boundary
data and distributed parameters. Without going into details we examine the
existence of the weak solution $z$ of $\left(  \ref{1.1}\right)  -\left(
\ref{1.2}\right)  $ such that $z-v\in X_{0}$ where $v\in\mathcal{V\subset
}X\cap L^{2}\left(  \mathbb{R}^{n}\right)  $ and $u\in\mathcal{U}\subset
L^{\infty}.$ Next, we address the stability issue for problem $\left(
\ref{1.1}\right)  -\left(  \ref{1.2}\right)  .$ By stability here we mean the
continuous dependence of solutions $z$ on distributed parameters $u$ and
boundary data $v$. It is possible to prove that under some suitable
assumptions, for an arbitrary pair $\left(  u,v\right)  $ there exists a weak
solution $z_{u,v}$ to problem $\left(  \ref{1.1}\right)  -\left(
\ref{1.2}\right)  $ which is stable with respect to the distributed parameters
$u$ and the boundary data $v.$ In general, a weak solution is not unique and
therefore by stability here we understand upper semicontinous dependence of
sets of weak solutions $\mathcal{S}_{u,v}^{z}$ to problem $\left(
\ref{1.1}\right)  -\left(  \ref{1.2}\right)  $ on distributed parameters $u$
and boundary data $v.$ In other words, we prove that $z_{u,v}\rightarrow
z_{u_{0},v_{0}}$ in $X\cap L^{2}\left(  \mathbb{R}^{n}\right)  ,$ if solutions
are unique, which means that $\emptyset\neq\mathrm{Lim}\sup\mathcal{S}%
_{u,v}^{z}\subset\mathcal{S}_{u_{0},v_{0}}^{z}$ in $X\cap L^{2}\left(
\mathbb{R}^{n}\right)  ,$ in general case, provided that $u$ tends to $u_{0}$
in $L^{\infty}$ and $v$ tends to $v_{0}$ in $X\cap L^{2}\left(  \mathbb{R}%
^{n}\right)  .$ The main stability result for problem $\left(  \ref{1.1}%
\right)  -\left(  \ref{1.2}\right)  $ is a direct consequence of Theorem
\ref{twierdzenie11} presented in Section \ref{mainresult}.

It should be noted that the weak formulation of system $\left(  \ref{1.1}%
\right)  $ with homogeneous exterior boundary condition\ corresponds to the
Euler-Lagrange equation for the following integral functional%
\begin{equation}
F\left(  z\right)  =\dfrac{c\left(  n,a\right)  }{2}\int\limits_{\mathbb{R}%
^{2n}}\frac{\left\vert z\left(  x\right)  -z\left(  y\right)  \right\vert
}{\left\vert x-y\right\vert ^{n+\alpha}}^{2}dxdy-\int\limits_{\Omega}G\left(
x,z\left(  x\right)  ,u\left(  x\right)  \right)  dx \label{1.3}%
\end{equation}
where $z\in X_{0},$ cf. \cite{SerVal}$.$ The above functional is referred to
as the functional of action or the functional of energy. On the function $G$
we impose, besides some technical, growth and regularity assumptions, the
following superlinearity assumption%
\begin{equation}
a<pG\left(  x,z,u\right)  \leq zG_{z}\left(  x,z,u\right)  \label{1.4}%
\end{equation}
which is satisfied for some $a>0,$ $p>2$ and $\left\vert z\right\vert $
sufficiently large. This condition guarantees that problem $\left(
\ref{1.1}\right)  $ $-\left(  \ref{1.2}\right)  $ can be referred to as a
superlinear exterior boundary value problem and as illustrated in Remark
\ref{uwaga} the nonlinear functional $\left(  \ref{1.3}\right)  ,$ in general,
can\ be unbounded from above and below. For that reason we cannot adopt the
approach to the existence and stability issue of Dirichlet problem involving
the fractional Laplacian presented for example in \cite{Bor2} where the
coercive functional bounded from below was studied, while in \cite{FelKasVoi}
only the linear case was treated.

In general, in the theory of boundary value problems and its applications we
consider, first of all, the problem of the existence of a solution and next
questions of stability, uniqueness, smoothness, asymptotics etc.. The problem
of existence of solutions to equation $\left(  \ref{1.1}\right)  $ with the
homogenous Dirichlet boundary condition corresponding to critical point of
mountain pass type was considered for example in the recent papers
\cite{FerGueZha,SerVal,SerVal2}. For more references on the existence results
for problems involving nonlocal fractional Laplacian equation with subcritical
nonlinearities, see, for example \cite{Ser} as well as \cite{BarColPabSan,
Cap} for problems with critical nonlinearities. Moreover, the asymptotically
linear case was investigated in \cite{FisSerVal} whereas in \cite{MolSer} one
can find a bifurcation result in the fractional setting. We also refer the
interested reader to papers \cite{AutPuc,BonVaz, CabSir1, CabSir2, CabTan,
CafSil, FelKasVoi, KulSta, SerVal3} for other results related to the
fractional Laplacian. In the present paper we apply to the functional defined
in $\left(  \ref{1.3}\right)  $ the renowned Mountain Pass Theorem presented,
for example, in \cite{MawWil,Rab,Str} which enables us to obtain the existence
result for problem related to $\left(  \ref{1.1}\right)  -\left(
\ref{1.2}\right)  $ similarly as in \cite{SerVal,SerVal2}.

As far as the continuous dependence results of solutions on parameters and
boundary data for equation $\left(  \ref{1.1}\right)  $ are concerned, up to
our best knowledge, the subject in fractional setting seems to have received
almost no attention in the literature. Some continuous dependence results for
homogenous Dirichlet boundary problem involving the fractional Laplacian one
can find in \cite{Bor2} where coercive case is examined by the direct method
of calculus of variations. Differentiable continuous dependence on parameters,
or in other words robustness result are presented in \cite{Bor3} where the
application of theorem on diffeomorphism leads to the stability result for the
problem involving one-dimensional fractional Laplacian with zero boundary
condition. In the present paper we obtain the existence and the continuous
dependence results for the exterior boundary value problem involving the
equation with the fractional Laplacian by adopting the approach presented in
\cite{Bor1} were superlinear elliptic boundary value with the nonhomogeneous
Dirichlet boundary condition was examined.

The structure of the paper reads as follows. Section 2 contains some useful
information on functional spaces introduced in \cite{SerVal2} by Servadei and
Valdinoci with an appropriate extension. The variational formulation of the
problem and some standing assumptions are presented in Section 3, whereas in
Section 4, our attention is focused on proving some auxiliary lemmas which are
of a paramount importance to the rest of the paper. Some sufficient condition
for the existence and continuous dependence of solutions to the exterior
boundary value problem involving the equation with the fractional Laplacian on
distributed parameters and boundary data can be found in Section 5. Finally,
the application of the stability result leads to the existence of the optimal
solution to the control problem described by $\left(  \ref{1.1}\right)
-\left(  \ref{1.2}\right)  $ with a integral performance index expressed by
some cost functional as asserted in Theorem \ref{twierdzenie5.1} and Theorem
\ref{twierdzenie5.2} in Section 6. The proof of these theorems relies in an
essential way on the continuous dependence results from Section 5.

\section{Functional setup\label{fra_set}}

In this section we introduce the notation and give some preliminary results
which will be useful in the sequel. We now recall, following
\cite{BarColPabSan, BraColPabSan, NezPalVal, RosSer1, RosSer2}, the definition
of the classical fractional Sobolev space. Let $D$ be an open set in
$\mathbb{R}^{n}.$ For $\alpha\in\left(  1,2\right)  ,$ by $H^{\alpha/2}\left(
D\right)  $, we denote the following space%
\begin{equation}
H^{\alpha/2}\left(  D\right)  =\left\{  z\in L^{2}\left(  D\right)
:\frac{z\left(  x\right)  -z\left(  y\right)  }{\left\vert x-y\right\vert
^{\left(  n+\alpha\right)  /2}}\in L^{2}\left(  D\times D\right)  \right\}
\label{definicja1}%
\end{equation}
where $D\subset\mathbb{R}^{n}$ for $n>\alpha$ is a general, possibly
unbounded, open domain in $\mathbb{R}^{n}$ with suitably smooth boundary, for
example Lipschitz (in our case $D=\Omega$ or $D=\mathbb{R}^{n})$. In the
literature, fractional Sobolev spaces are also referred to as Aronszajn,
Gagliardo or Slobodeckij\textbf{ }spaces, associated with the names of the
ones who introduced them almost simultaneously, see \cite{Aro,Gag,Slo}.

\noindent The space $H^{\alpha/2}\left(  D\right)  $ is a Hilbert space placed
between $L^{2}\left(  D\right)  $ and $H^{1}\left(  D\right)  $ endowed with
the norm%
\begin{equation}
\left\Vert z\right\Vert _{H^{\alpha/2}\left(  D\right)  }=\left\Vert
z\right\Vert _{L^{2}\left(  D\right)  }+\left(  \int\limits_{D\times D}%
\frac{\left\vert z\left(  x\right)  -z\left(  y\right)  \right\vert ^{2}%
}{\left\vert x-y\right\vert ^{n+\alpha}}dxdy\right)  ^{1/2}. \label{norma1}%
\end{equation}
$H_{0}^{\alpha/2}\left(  D\right)  $ can be defined as completion of
$C_{0}^{\infty}\left(  D\right)  $ with respect to the norm in $H^{\alpha
/2}\left(  D\right)  $ or $H^{\alpha/2}\left(  \mathbb{R}^{n}\right)  $ and
one can extend the functions from $H_{0}^{\alpha/2}\left(  D\right)  $ with
$0$ to $\mathbb{R}^{n}$ as presented in \cite{FelKasVoi}. It should be
emphasized that for domains with non-Lipschitz boundary or for $\alpha
\in(0,1]$ various definitions of the space of the fractional order might lead
to non-equivalent formulations, for more details see, for example
\cite{BonVaz, FelKasVoi, NocOtaSal}$.$

Due to the nonlocal character of the fractional Laplacian, we will consider
spaces $X^{\alpha/2},$ $X_{0}^{\alpha/2}$ introduced in \cite{SerVal2} and
denoted therein by $X,$ $X_{0},$ respectively. However, in the paper we shall
work with the specific kernel of the form $K\left(  x\right)  =\left\vert
x\right\vert ^{-\left(  n+\alpha\right)  }$. Let $\Omega$ be bounded domain
with a Lipschitz boundary and denote by $Q$ the following set
\[
Q=\mathbb{R}^{2n}\backslash\left(  \left(  \mathbb{R}^{n}\backslash
\Omega\right)  \times\left(  \mathbb{R}^{n}\backslash\Omega\right)  \right)
.
\]
We define%
\[
X^{\alpha/2}=\left\{  z:\mathbb{R}^{n}\rightarrow\mathbb{R}:z|_{\Omega}\in
L^{2}\left(  \Omega\right)  \text{ and }\frac{z\left(  x\right)  -z\left(
y\right)  }{\left\vert x-y\right\vert ^{\left(  n+\alpha\right)  /2}}\in
L^{2}\left(  Q\right)  \right\}
\]
with the norm
\begin{equation}
\left\Vert z\right\Vert _{X^{\alpha/2}}=\left\Vert z\right\Vert _{L^{2}\left(
\Omega\right)  }+[z]_{n,\alpha}=\left\Vert z\right\Vert _{L^{2}\left(
\Omega\right)  }+\left(  \int\limits_{Q}\frac{\left\vert z\left(  x\right)
-z\left(  y\right)  \right\vert ^{2}}{\left\vert x-y\right\vert ^{n+\alpha}%
}dxdy\right)  ^{1/2}. \label{normaX}%
\end{equation}
For the proof that $\left\Vert \cdot\right\Vert _{X^{\alpha/2}}$ is a norm on
$X^{\alpha/2},$ see, for instance, \cite{SerVal2}. Obviously, $Q\varsupsetneq
\Omega\times\Omega$ and it implies that $X^{\alpha/2}$ and $H^{\alpha
/2}\left(  \Omega\right)  $ are not equivalent as the norms $\left(
\ref{norma1}\right)  $ and $\left(  \ref{normaX}\right)  $ are not the same.
We also consider the linear subspace of $X^{\alpha/2}$
\[
X_{0}^{\alpha/2}=\left\{  z\in X^{\alpha/2}:z=0\text{ a.e. in }\mathbb{R}%
^{n}\backslash\Omega\right\}
\]
with the norm
\begin{equation}
\left\Vert z\right\Vert _{X_{0}^{\alpha/2}}=\left(  \int\limits_{Q}%
\frac{\left\vert z\left(  x\right)  -z\left(  y\right)  \right\vert ^{2}%
}{\left\vert x-y\right\vert ^{n+\alpha}}dxdy\right)  ^{1/2}. \label{normaX0}%
\end{equation}
We remark that $X^{\alpha/2},$ $X_{0}^{\alpha/2}$ are nonempty, since, by
\cite[Lemma 11]{SerVal3}, $C_{0}^{2}\left(  \Omega\right)  \subseteq
X_{0}^{\alpha/2}$. Moreover, the space $X_{0}^{\alpha/2}$ is a Hilbert space,
for the proof of this, see,\cite[Lemma 2.3]{FelKasVoi} or \cite[Lemma
7]{SerVal2} and the inner product has the form
\[
\left(  z_{1},z_{2}\right)  _{X_{0}^{\alpha/2}}=\left(  \int\limits_{Q}%
\frac{\left(  z_{1}\left(  x\right)  -z_{1}\left(  y\right)  \right)  \left(
z_{2}\left(  x\right)  -z_{2}\left(  y\right)  \right)  }{\left\vert
x-y\right\vert ^{n+\alpha}}dxdy\right)  ^{1/2}.
\]
Furthermore, we have:

\begin{itemize}
\item $X^{\alpha/2}\subset H^{\alpha/2}\left(  \Omega\right)  ,$ cf.
\cite[Example 2]{FelKasVoi}, \cite[Lemma 5]{SerVal2}

\item $H^{\alpha/2}\left(  \mathbb{R}^{n}\right)  \subset X^{\alpha/2},$ cf.
\cite[Remark 2.2]{FelKasVoi},

\item $X_{0}^{\alpha/2}\subset H^{\alpha/2}\left(  \mathbb{R}^{n}\right)  \cap
H_{0}^{\alpha/2}\left(  \Omega\right)  ,$ cf. \cite[Lemma 5]{SerVal2}%
,\cite[Remark 2.2]{FelKasVoi}, \cite{SerVal5}.
\end{itemize}

In order to consider nonhomogenous exterior boundary data we assume that these
values are prescribed by a function $v:\mathbb{R}^{n}\rightarrow\mathbb{R}.$
For the functional treatment of this problem we need a modification of the
space $X^{\alpha/2}$ that turns this normed space into a Hilbert space with
the appropriate inner product. For that reason we define the following space
\[
Y^{\alpha/2}=X^{\alpha/2}\cap L^{2}\left(  \mathbb{R}^{n}\right)
\]
with the norm%
\begin{equation}
\left\Vert z\right\Vert _{Y^{\alpha/2}}=\left\Vert z\right\Vert _{L^{2}\left(
\mathbb{R}^{n}\right)  }+\left(  \int\limits_{Q}\frac{\left\vert z\left(
x\right)  -z\left(  y\right)  \right\vert ^{2}}{\left\vert x-y\right\vert
^{n+\alpha}}dxdy\right)  ^{1/2}. \label{normaX1}%
\end{equation}
By analogy with the proof of Lemma 2.3 in \cite{FelKasVoi} or the proof of
Lemma 7 in \cite{SerVal2} it can be seen that this space is a separable
Hilbert space with the inner product defined by%
\begin{equation}
\left(  z_{1},z_{2}\right)  _{Y^{\alpha/2}}=\left(  z_{1},z_{2}\right)
_{L^{2}\left(  \mathbb{R}^{n}\right)  }+\left(  z_{1},z_{2}\right)
_{X_{0}^{\alpha/2}}. \label{scalarX1}%
\end{equation}
Immediately, from the definition we have the following inclusions%
\[
H^{\alpha/2}\left(  \mathbb{R}^{n}\right)  \subset Y^{\alpha/2}\subset
X^{\alpha/2}.
\]

\noindent It is worth reminding the reader that for a bounded domain
$\Omega\subset\mathbb{R}^{n}$ with a Lipschitz boundary, the space
$X_{0}^{\alpha/2}$ is compactly embedded into $L^{s}\left(  \Omega\right)  $
for $s\in\left[  1,2_{\alpha}^{\ast}\right)  $ where $2_{\alpha}^{\ast
}=2n/\left(  n-\alpha\right)  $ and the inequality holds%
\begin{equation}
\left\Vert z\right\Vert _{L^{s}\left(  \Omega\right)  }\leq C\left\Vert
z\right\Vert _{X_{0}^{\alpha/2}} \label{embeding}%
\end{equation}
for $n>\alpha$ and any $z\in X_{0}^{\alpha/2}$, cf. Lemma 8 in \cite{SerVal2}
or Corollary 7.2 in \cite{NezPalVal}$.$

\noindent For further details on the fractional Sobolev spaces we refer the
reader to \cite{NezPalVal} and the references therein, while for other details
on $X^{\alpha/2}$ and $X_{0}^{\alpha/2}$ we refer to \cite{SerVal3}, where
these functional spaces were introduced and various properties of these spaces
were proved.

\section{Variational formulation of the problem and standing assumptions}

In the paper we shall consider a problem involving a weak formulation of the
following equation with the fractional Laplacian of the form
\begin{equation}
\left\{
\begin{array}
[c]{l}%
(-\Delta)^{\alpha/2}z\left(  x\right)  =G_{z}\left(  x,z\left(  x\right)
,u\left(  x\right)  \right)  \text{ in }\Omega\subset\mathbb{R}^{n}\\
z\left(  x\right)  =v\left(  x\right)  \text{ in }\mathbb{R}^{n}%
\backslash\Omega,
\end{array}
\right.  \label{2.1}%
\end{equation}
where the exterior boundary condition will be ascertained by claiming that
$z-v\in X_{0}^{\alpha/2},$ $G_{z}:$ $\Omega\times\mathbb{R}\times
\mathbb{R}^{m}\rightarrow\mathbb{R}$ and $\Omega\subset\mathbb{R}^{n},$
$n>\alpha$ is a bounded domain with a Lipschitz boundary. Let $v_{0}$ be a
fixed element from the space $Y^{\alpha/2}.$ By $\mathcal{V}$ we denote the
set of all boundary data $v$ such that
\[
\mathcal{V}=\left\{  v\in Y^{\alpha/2}:\left\Vert v-v_{0}\right\Vert
_{Y^{\alpha/2}}\leq l_{1}\right\}
\]
for $l_{1}>0$ and $\mathcal{U}$ denotes the set of distributed parameters $u$
of the form%
\[
\mathcal{U}=\left\{  u\in L^{\infty}:u\left(  x\right)  \in U\subset
\mathbb{R}^{m}\text{ for a.e. }x\in\Omega\text{ and }\left\Vert u\right\Vert
_{L^{\infty}}\leq l_{2}\right\}
\]
for $l_{2}>0$ and some subset $U$ of $\mathbb{R}^{m}$ with $m\geq1.$

\noindent Besides, one additionally require that the mapping $v$ is chosen
such that $v\in\left(  X_{0}^{\alpha/2}\right)  ^{\perp},$ while we have the
following orthogonal decomposition
\begin{equation}
Y^{\alpha/2}=X_{0}^{\alpha/2}\oplus\left(  X_{0}^{\alpha/2}\right)  ^{\perp}.
\label{suma}%
\end{equation}

As it was announced we look for a weak solution of $\left(  \ref{2.1}\right)
$ such that $z-v\in X_{0}^{\alpha/2}$. Let $w=z-v,$ then the problem in
$\left(  \ref{2.1}\right)  $ can be rewritten in the following homogenized
form%
\begin{equation}
\left\{
\begin{array}
[c]{l}%
(-\Delta)^{\alpha/2}w\left(  x\right)  +\left(  -\Delta\right)  ^{\alpha
/2}v\left(  x\right)  =G_{w}\left(  x,\left(  w+v\right)  \left(  x\right)
,u\left(  x\right)  \right)  \text{ in }\Omega\subset\mathbb{R}^{n}\\
w\left(  x\right)  =0\text{ in }\mathbb{R}^{n}\backslash\Omega.
\end{array}
\right.  \label{problem2}%
\end{equation}
\newline Next, we say that $w\in X_{0}^{\alpha/2}$ is a weak solution or an
energy solution to $\left(  \ref{problem2}\right)  $ if the identity%
\begin{align}
&  c\left(  n,\alpha\right)  \int\limits_{Q}\frac{\left(  w\left(  x\right)
-w\left(  y\right)  \right)  \left(  \varphi\left(  x\right)  -\varphi\left(
y\right)  \right)  }{\left\vert x-y\right\vert ^{n+\alpha}}dxdy+c\left(
n,\alpha\right)  \int\limits_{Q}\frac{\left(  v\left(  x\right)  -v\left(
y\right)  \right)  \left(  \varphi\left(  x\right)  -\varphi\left(  y\right)
\right)  }{\left\vert x-y\right\vert ^{n+\alpha}}dxdy\label{weak}\\
&  =\int\limits_{\Omega}G_{w}\left(  x,\left(  w+v\right)  \left(  x\right)
,u\left(  x\right)  \right)  \varphi\left(  x\right)  dx\nonumber
\end{align}
holds for every function $\varphi\in X_{0}^{\alpha/2}.$ Then the functional of
action defined on $X_{0}^{\alpha/2}$ reads as%
\begin{align}
F_{u,v}\left(  w\right)   &  =c\left(  n,\alpha\right)  \left(  \int
\limits_{Q}\frac{\left\vert w\left(  x\right)  -w\left(  y\right)  \right\vert
^{2}}{2\left\vert x-y\right\vert ^{n+\alpha}}dxdy+\int\limits_{Q}\frac{\left(
v\left(  x\right)  -v\left(  y\right)  \right)  \left(  w\left(  x\right)
-w\left(  y\right)  \right)  }{\left\vert x-y\right\vert ^{n+\alpha}%
}dxdy\right) \label{akcja1}\\
&  -\int\limits_{\Omega}G\left(  x,\left(  w+v\right)  \left(  x\right)
,u\left(  x\right)  \right)  dx\nonumber
\end{align}
and is related to $F$ defined by $\left(  \ref{1.3}\right)  $ by%
\[
F_{u,v}\left(  w\right)  =F\left(  w+v\right)  -c\left(  n,\alpha\right)
\int\limits_{Q}\frac{\left\vert v\left(  x\right)  -v\left(  y\right)
\right\vert ^{2}}{2\left\vert x-y\right\vert ^{n+\alpha}}dxdy.
\]

On the function $G$ we shall impose the following conditions:

\begin{enumerate}
\item[(C1)] $G,$ $G_{z}$ are Carath\'{e}odory functions, i.e. they are
measurable with respect to $x$ for any $(z,u)\in\mathbb{R}\times\mathbb{R}%
^{m}$ and continuous with respect to $(z,u)\in\mathbb{R}\times\mathbb{R}^{m}$
for a.e. $x\in\Omega;$

\item[(C2)] for any bounded subset $U_{0}\subset U,$ there exists $c>0$ such
that
\[
\left\vert G\left(  x,z,u\right)  \right\vert \leq c\left(  1+\left\vert
z\right\vert ^{s}\right)  ,\,\left\vert G_{z}\left(  x,z,u\right)  \right\vert
\leq c\left(  1+\left\vert z\right\vert ^{s-1}\right)  ,
\]
for $z\in\mathbb{R},$ $u\in U_{0}$ and a.e. $x\in\Omega$ , where $s\in\left(
2,2_{\alpha}^{\ast}\right)  $ with $2_{\alpha}^{\ast}=2n/(n-\alpha)$ for
$n>\alpha~$and $\alpha\in\left(  1,2\right)  ;$

\item[(C3)] there exist $p>2,$ $a>0$ and $R>0$ such that
\[
a<pG\left(  x,z,u\right)  \leq zG_{z}\left(  x,z,u\right)
\]
for a.e. $x\in\Omega$ , any $u\in U$ and $\left\vert z\right\vert \geq R;$

\item[(C4)] there exist $\zeta>0$ and $0<b<\frac{c\left(  n,\alpha\right)
}{2}$ such that
\[
\left\vert G\left(  x,z,u\right)  +\frac{c\left(  n,\alpha\right)  }{2}%
z^{2}\right\vert \leq\frac{b}{2}\left\vert z-v_{0}\left(  x\right)
\right\vert ^{2}%
\]
for $\left\vert z\right\vert \leq\zeta,$ $u\in U$ and a.e. $x\in\Omega$ and
$\mathrm{ess}\sup\left\vert v_{0}\right\vert <\zeta;$

\item[(C5)] for any $u_{0}\in U$ and $\varepsilon>0,$ there exists a constant
$c>0$ such that%
\begin{align*}
\left\vert G\left(  x,z,u_{1}\right)  -G\left(  x,z,u_{2}\right)  \right\vert
&  \leq c\left(  1+\left\vert z\right\vert ^{2}\right)  \left\vert u_{1}%
-u_{2}\right\vert \\
\left\vert G_{z}\left(  x,z,u_{1}\right)  -G_{z}\left(  x,z,u_{2}\right)
\right\vert  &  \leq c\left(  1+\left\vert z\right\vert \right)  \left\vert
u_{1}-u_{2}\right\vert
\end{align*}
for a.e. $x\in\Omega$, any $z\in\mathbb{R}$ and $u_{1},$ $u_{2}\in U$ such
that $\left\vert u_{1}-u_{0}\right\vert <\varepsilon$ and $\left\vert
u_{2}-u_{0}\right\vert <\varepsilon.$
\end{enumerate}

In short, conditions $\left(  C1\right)  -\left(  C4\right)  $, as we shall
demonstrate, guarantee the existence of weak solution to problem $\left(
\ref{2.1}\right)  $ corresponding to the critical points of mountain pass type
of the associated functional of action. If, additionally, condition $(C5)$ is
satisfied, it is feasible to prove that these solutions depend continuously
(or in general case upper semicontinuously) on distributed parameter $u$ and
boundary data $v$ in appropriate topologies.

\section{Verification of Mountain Pass Theorem assumptions}

In this section we focus our attention on proving some auxiliary results which
are of a key importance to the rest of the paper. First of all, we recall some
definitions. Let $\mathcal{I}:E\rightarrow\mathbb{R}$ be a functional of
$C^{1}-$class defined on a real Banach space $E$. A point $w\in E$ is a
critical point of the functional $\mathcal{I}$ if $\mathcal{I}^{\prime}\left(
w\right)  =0$. Moreover, a value $c=\mathcal{I}\left(  w\right)  $ is referred
to as a critical value of the functional $\mathcal{I}$ related to a critical
point $w$.

In what follows we will need some compactness properties of the functional
$\mathcal{I}$ guaranteeing for example by the Palais-Smale condition. Now we
recall what this means. A sequence $\left\{  w_{k}\right\}  \subset E$ is
referred to as a Palais-Smale sequence for a functional $\mathcal{I}$ if for
some $C>0$, any $k\in\mathbb{N},$ $\left\vert \mathcal{I}\left(  w_{k}\right)
\right\vert \leq C$ and $\mathcal{I}^{\prime}\left(  w_{k}\right)
\rightarrow0$ as $k\rightarrow\infty.$ We say that $\mathcal{I}$ satisfies the
Palais-Smale condition if any Palais-Smale sequence possesses a convergent
subsequence in $E$. For more details on the Palais-Smale condition we refer
the reader to Chapter 4.2 in book \cite{MawWil} by Mawhin and Willem.

In this section we shall use, as in \cite{Bor1}, the following version of the
Mountain Pass Theorem, cf. \cite{MawWil,Str}.

\begin{theorem}
If\label{MPT}\newline$(a)$ there exist $\omega_{0},\omega_{1}\in E$ and a
bounded neighborhood $B$ of $\omega_{0},$ such that $\omega_{1}\in
E\setminus\overline{B},$\newline$(b)\inf\nolimits_{\omega\in\partial
B}\mathcal{I}\left(  \omega\right)  >\max\left\{  \mathcal{I}\left(
\omega_{0}\right)  ,\mathcal{I}\left(  \omega_{1}\right)  \right\}  ,$%
\newline$(c)$ $c=\inf\nolimits_{g\in M}\max\nolimits_{t\in\left[  0,1\right]
}\mathcal{I}\left(  g\left(  t\right)  \right)  ,$ where $M=\left\{  g\in
C\left(  \left[  0,1\right]  ,E\right)  :g\left(  0\right)  =\omega_{0},\text{
}g\left(  1\right)  =\omega_{1}\right\}  ,$\newline$(d)$ $\mathcal{I}$
satisfies the Palais-Smale condition,\newline then $c$ is a critical value of
$\mathcal{I}$ and $c>\max\left\{  \mathcal{I}\left(  \omega_{0}\right)
,\mathcal{I}\left(  \omega_{1}\right)  \right\}  .$
\end{theorem}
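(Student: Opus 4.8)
The plan is to prove the two assertions separately: first the strict inequality $c > \max\{\mathcal{I}(\omega_{0}),\mathcal{I}(\omega_{1})\}$, and then that this minimax level is actually attained as a critical value. For the inequality, I would note that every $g\in M$ must meet $\partial B$: since $g(0)=\omega_{0}\in B$ while $g(1)=\omega_{1}\in E\setminus\overline{B}$, continuity of $t\mapsto g(t)$ together with connectedness of $[0,1]$ forces the path to leave $B$, so there is some $t^{*}\in(0,1)$ with $g(t^{*})\in\partial B$. Hence $\max_{t\in[0,1]}\mathcal{I}(g(t))\geq\mathcal{I}(g(t^{*}))\geq\inf_{\omega\in\partial B}\mathcal{I}(\omega)$, and taking the infimum over $g\in M$ gives $c\geq\inf_{\partial B}\mathcal{I}$. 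Assumption $(b)$ then yields $c\geq\inf_{\partial B}\mathcal{I}>\max\{\mathcal{I}(\omega_{0}),\mathcal{I}(\omega_{1})\}$.

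For the second and harder assertion I would argue by contradiction, assuming $c$ is not a critical value (no critical point sits at level $c$), and contradict the definition of $c$ as an infimum by a deformation argument. The first step is to extract from $(d)$ a uniform lower bound on the derivative near level $c$: if for every $k$ there were $\omega_{k}$ with $|\mathcal{I}(\omega_{k})-c|\leq 1/k$ and $\|\mathcal{I}'(\omega_{k})\|<1/k$, then $\{\omega_{k}\}$ would be a Palais--Smale sequence at level $c$, which by $(d)$ would have a subsequence converging to a critical point at level $c$ --- impossible. Thus there exist constants $\bar\varepsilon,\delta>0$ with $\|\mathcal{I}'(\omega)\|\geq\delta$ for every $\omega$ in the strip $\{|\mathcal{I}(\omega)-c|\leq\bar\varepsilon\}$, and we may shrink $\bar\varepsilon$ so that in addition $c-\bar\varepsilon>\max\{\mathcal{I}(\omega_{0}),\mathcal{I}(\omega_{1})\}$.

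The second step is the deformation lemma. Since $E$ is a general Banach space, I would construct a locally Lipschitz pseudo-gradient vector field $V$ for $\mathcal{I}$ on its regular set, cut it off outside the strip and renormalize it, and then let $\eta$ be the flow solving $\frac{d}{dt}\eta=-V(\eta)$, $\eta(0,\cdot)=\mathrm{id}$. The derivative bound from the previous step guarantees that along trajectories entering the strip the energy decreases at a controlled rate, so that for a suitable $\varepsilon\in(0,\bar\varepsilon)$ the time-one map satisfies $\eta(1,\mathcal{I}^{c+\varepsilon})\subset\mathcal{I}^{c-\varepsilon}$, where $\mathcal{I}^{a}:=\{\mathcal{I}\leq a\}$; moreover, because $\omega_{0}$ and $\omega_{1}$ lie below the strip, the cut-off forces $\eta(t,\omega_{0})=\omega_{0}$ and $\eta(t,\omega_{1})=\omega_{1}$ for all $t$.

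Finally, using $c=\inf_{g\in M}\max_{t}\mathcal{I}(g(t))$, I would pick $g\in M$ with $\max_{t}\mathcal{I}(g(t))<c+\varepsilon$ and deform it to $\tilde g(t):=\eta(1,g(t))$. Since $\eta(1,\cdot)$ fixes the endpoints, $\tilde g\in M$; since $g(t)\in\mathcal{I}^{c+\varepsilon}$ for all $t$, the deformation gives $\tilde g(t)\in\mathcal{I}^{c-\varepsilon}$, whence $\max_{t}\mathcal{I}(\tilde g(t))\leq c-\varepsilon<c$, contradicting the definition of $c$. Therefore $c$ is a critical value, and combined with the first paragraph we get $c>\max\{\mathcal{I}(\omega_{0}),\mathcal{I}(\omega_{1})\}$. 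The main obstacle is the deformation lemma: building the pseudo-gradient field and verifying that its flow simultaneously fixes the endpoints and strictly lowers the energy across level $c$. This is precisely where the Palais--Smale condition $(d)$ is indispensable, entering through the uniform derivative bound; the remaining estimates along the flow are routine Gronwall-type bounds, and in the write-up one may simply invoke the deformation lemma as stated in \cite{MawWil,Str}.
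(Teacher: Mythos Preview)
Your argument is the standard and correct proof of the Mountain Pass Theorem: the connectedness argument for the strict inequality, followed by the contradiction via the quantitative deformation lemma built from a pseudo-gradient flow, is exactly what one finds in the references \cite{MawWil,Str}. There is no gap.

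However, you should be aware that the paper does \emph{not} prove this theorem at all. Theorem~\ref{MPT} is stated as a tool imported from the literature (``we shall use \ldots\ the following version of the Mountain Pass Theorem, cf.\ \cite{MawWil,Str}''), and the paper proceeds directly to apply it. So there is nothing to compare your proposal against: you have supplied a full proof where the paper simply cites one. If your goal was to reproduce the paper's own argument for this statement, the faithful answer is a one-line citation; if your goal was to make the paper self-contained, your sketch does the job and follows precisely the route of the cited sources.
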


Throughout this section we shall use the following notation and definitions.
First,%
\[
M_{r}=\left\{  g\in C\left(  \left[  0,1\right]  ,B_{r}\right)  :g\left(
0\right)  =\omega_{0},\text{ }g\left(  1\right)  =\omega_{1}\right\}
\]
where $\omega_{0},\omega_{1}\in B_{r}$ and $B_{r}=\left\{  w\in X_{0}%
^{\alpha/2}:\left\Vert w\right\Vert _{X_{0}^{\alpha/2}}<r\right\}  $ and
$r>0.$ Next, for $k\in\mathbb{N}_{0},$ let
\[
\mathcal{I}_{k}:X_{0}^{\alpha/2}\rightarrow\mathbb{R}%
\]
denote an arbitrary sequence of functionals of $C^{1}-$class$,$ and
$c_{k}\left(  r\right)  $ be the value defined by setting
\begin{equation}
c_{k}\left(  r\right)  =\inf\limits_{g\in M_{r}}\max\limits_{t\in\left[
0,1\right]  }\mathcal{I}_{k}\left(  g\left(  t\right)  \right)  .
\label{(1.12)}%
\end{equation}
Moreover, for $k\in\mathbb{N}_{0}$, let $W_{k}\left(  r\right)  $ denote the
set of all critical points in $B_{r}$ corresponding to the value $c_{k}\left(
r\right)  ,$ i.e.%
\begin{equation}
W_{k}\left(  r\right)  =\left\{  w\in B_{r}:\mathcal{I}_{k}\left(  w\right)
=c_{k}\left(  r\right)  \text{ and }\mathcal{I}_{k}^{\prime}\left(  w\right)
=0\right\}  . \label{(1.13)}%
\end{equation}
In what follows, we shall establish the properties of the upper limit of sets
$W_{k}\left(  r\right)  $ in order to state stability results for the problem
under consideration. Let us recall that by the Painlev\'{e}-Kuratowski upper
limit of sets or, in short, the upper limit of sets $S_{k}$, denoted by
$\mathrm{Lim}\sup S_{k},$ we understand the set of all cluster points with
respect to the strong topology of $E$ of a sequence $\left\{  s_{k}\right\}  $
such that $s_{k}\in S_{k}$ for $k\in\mathbb{N}$. In particular, $\mathrm{Lim}%
\sup W_{k}\left(  r\right)  $ is the upper limit of the sets $W_{k}\left(
r\right)  ,$ $k\in\mathbb{N}$, hence the set of all cluster points with
respect to the strong topology of $X_{0}^{\alpha/2}$ of a sequence $\left\{
w_{k}\right\}  $ such that $w_{k}\in W_{k}\left(  r\right)  $ for
$k\in\mathbb{N}.$ For more details on the Painlev\'{e}-Kuratowski upper limits
of sets we refer the reader to the book \cite{AubFra} by Aubin and Frankowska.

Now we prove, under some assumptions imposed on the sequences $\left\{
\mathcal{I}_{k}\right\}  ,$ $\left\{  \mathcal{I}_{k}^{\prime}\right\}  ,$
that the upper limit in $X_{0}^{\alpha/2}$ of sets $W_{k}\left(  r\right)  $
defined in $\left(  \ref{(1.13)}\right)  $ is nonempty and is a subset of
$W_{0}\left(  r\right)  $.

\begin{lemma}
Assume that\newline$(a)$ for any $k\in\mathbb{N}_{0},$ the functional
$\mathcal{I}_{k},$ is of $C^{1}-$class\newline$(b)$ the
functional\ $\mathcal{I}_{0}$ satisfies the Palais-Smale condition,\newline%
$(c)$ the sequences $\left\{  \mathcal{I}_{k}\right\}  ,$ $\left\{
\mathcal{I}_{k}^{\prime}\right\}  $ tend uniformly on the ball $B_{r}$ to
$\mathcal{I}_{0},$ $\mathcal{I}_{0}^{\prime},$ respectively, \newline$\left(
d\right)  $ for any sufficiently large $k\in\mathbb{N}_{0},$ the sets
$W_{k}\left(  r\right)  $ are nonempty$.$\newline Then any sequence $\left\{
w_{k}\right\}  $ such that $w_{k}\in W_{k}\left(  r\right)  ,$ $k\in
\mathbb{N}$ is relatively compact in $X_{0}^{\alpha/2}$ and $\mathrm{Lim}\sup
W_{k}\left(  r\right)  \subset W_{0}\left(  r\right)  $.$\label{lemat11}$
\end{lemma}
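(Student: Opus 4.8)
The plan is to establish the two claimed conclusions in sequence: first the relative compactness of any sequence $\{w_k\}$ with $w_k\in W_k(r)$, and then, as a consequence, the inclusion $\mathrm{Lim}\sup W_k(r)\subset W_0(r)$. The whole argument rides on exploiting hypotheses $(b)$ and $(c)$ together: the Palais--Smale condition for the limiting functional $\mathcal{I}_0$, combined with the uniform convergence of both $\{\mathcal{I}_k\}$ and $\{\mathcal{I}_k'\}$ on the ball $B_r$.

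**First I would** unwind the definition of $w_k\in W_k(r)$: for each $k$ we have $\mathcal{I}_k(w_k)=c_k(r)$ and $\mathcal{I}_k'(w_k)=0$, with $w_k\in B_r$ so the sequence is automatically norm-bounded in $X_0^{\alpha/2}$. The key reduction is to show that $\{w_k\}$ is in fact a Palais--Smale sequence for the \emph{limit} functional $\mathcal{I}_0$. For the boundedness of values, I would write
\begin{equation*}
|\mathcal{I}_0(w_k)|\leq |\mathcal{I}_0(w_k)-\mathcal{I}_k(w_k)|+|\mathcal{I}_k(w_k)|\leq \sup_{w\in B_r}|\mathcal{I}_0(w)-\mathcal{I}_k(w)|+|c_k(r)|,
\end{equation*}
where the first term vanishes by $(c)$ and $|c_k(r)|$ stays bounded (here one should observe that $c_k(r)\to c_0(r)$, or at least that $\{c_k(r)\}$ is bounded, which again follows from the uniform convergence $\mathcal{I}_k\to\mathcal{I}_0$ on $B_r$ applied to the $\inf$–$\max$ in the definition \eqref{(1.12)}). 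For the derivatives I would estimate the operator norm
\begin{equation*}
\|\mathcal{I}_0'(w_k)\|\leq \|\mathcal{I}_0'(w_k)-\mathcal{I}_k'(w_k)\|+\|\mathcal{I}_k'(w_k)\|\leq \sup_{w\in B_r}\|\mathcal{I}_0'(w)-\mathcal{I}_k'(w)\|+0,
\end{equation*}
and the supremum tends to $0$ by $(c)$. Hence $\{w_k\}$ is a bounded Palais--Smale sequence for $\mathcal{I}_0$, so by $(b)$ it has a subsequence converging strongly in $X_0^{\alpha/2}$ to some limit $w_0$; this gives relative compactness.

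**The main obstacle** I anticipate is the inclusion step, specifically verifying that the strong limit $w_0$ of a convergent subsequence actually lies in $W_0(r)$ — that is, that $\mathcal{I}_0'(w_0)=0$ \emph{and} $\mathcal{I}_0(w_0)=c_0(r)$, and moreover that $w_0\in B_r$ rather than escaping to the boundary $\partial B_r$. To close this I would pass to the limit along the convergent subsequence $w_{k_j}\to w_0$: continuity of $\mathcal{I}_0$ and of $\mathcal{I}_0'$ (from the $C^1$ hypothesis $(a)$) gives $\mathcal{I}_0(w_{k_j})\to\mathcal{I}_0(w_0)$ and $\mathcal{I}_0'(w_{k_j})\to\mathcal{I}_0'(w_0)$. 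Combining $\mathcal{I}_0'(w_{k_j})\to 0$ with continuity yields $\mathcal{I}_0'(w_0)=0$, and combining $\mathcal{I}_0(w_{k_j})=\mathcal{I}_{k_j}(w_{k_j})+o(1)=c_{k_j}(r)+o(1)\to c_0(r)$ yields $\mathcal{I}_0(w_0)=c_0(r)$. The delicate point is ruling out $\|w_0\|_{X_0^{\alpha/2}}=r$; I expect this to follow from the mountain-pass geometry underlying the construction of $c_k(r)$, ensuring the critical level is attained strictly inside the ball, so the limit cannot sit on the boundary.

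**Finally,** the inclusion $\mathrm{Lim}\sup W_k(r)\subset W_0(r)$ is essentially a restatement: by definition, any point of $\mathrm{Lim}\sup W_k(r)$ is a cluster point (in the strong topology) of some sequence $\{w_k\}$ with $w_k\in W_k(r)$. Such a cluster point is the strong limit of a subsequence, and the argument just outlined shows precisely that this limit belongs to $W_0(r)$. Thus every cluster point lies in $W_0(r)$, which is the desired inclusion, and the nonemptiness of $W_k(r)$ for large $k$ from $(d)$ guarantees the sequences under consideration actually exist.
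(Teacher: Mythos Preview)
Your argument is essentially the same as the paper's: show that $\{w_k\}$ is a Palais--Smale sequence for $\mathcal{I}_0$ using the uniform convergence in $(c)$, invoke $(b)$ for relative compactness, establish $c_k(r)\to c_0(r)$ from $(c)$, and then pass to the limit to verify $\mathcal{I}_0'(w_0)=0$ and $\mathcal{I}_0(w_0)=c_0(r)$ (the paper casts this last step as a contradiction on $\delta=\mathcal{I}_0(\tilde w)-c_0(r)$, but the substance is identical to your direct argument). Regarding the boundary concern you flag: the paper's own proof does not address whether the limit might land on $\partial B_r$; in the only place the lemma is used (Theorem~\ref{twierdzenie11}) this is a non-issue because $r$ is chosen strictly larger than the uniform bound $\rho$ supplied by Lemma~\ref{lemat12}, so all $W_k$ sit in $B_\rho\subset\subset B_r$ and any limit stays in the open ball.
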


\begin{proof}
In the proof we shall follow the lines of the proof of Lemma 3.1 from
\cite{Bor1}. First of all, one can prove that $\mathrm{Lim}\sup W_{k}\left(
r\right)  $ is not empty$.$ To do this, let $\left\{  w_{k}\right\}  $ be an
arbitrary sequence such that $w_{k}\in W_{k}\left(  r\right)  $ for
$k\in\mathbb{N}_{0}.$ Such a sequence exists by $\left(  d\right)  $.
Moreover, by $\left(  c\right)  ,$ $0=\lim\nolimits_{k\rightarrow\infty
}\mathcal{I}_{0}^{\prime}\left(  w_{k}\right)  $ as $\mathcal{I}_{k}^{\prime
}\left(  w_{k}\right)  =0$ for $k\in\mathbb{N}_{0}.$ Furthermore, $\left\Vert
w_{k}\right\Vert _{X_{0}^{\alpha/2}}<r$ hence the sequence $\mathcal{I}%
_{0}\left(  w_{k}\right)  $ is bounded. Since $\mathcal{I}_{0}$ satisfies the
Palais-Smale condition, as assumed in $(b)$, the sequence $\left\{
w_{k}\right\}  $ is relatively compact in $X_{0}^{\alpha/2}$, that is,
$\mathrm{Lim}\sup W_{k}\left(  r\right)  $ is not empty$.$\newline Next,
again, by $\left(  c\right)  ,$ we get
\begin{equation}
\lim\limits_{k\rightarrow\infty}c_{k}\left(  r\right)  =c_{0}\left(  r\right)
. \label{(1.14)}%
\end{equation}
Moreover for any sequence $\left\{  w_{k}\right\}  $ such that $w_{k}\in
W_{k}\left(  r\right)  $ for $k\in\mathbb{N},$ we have $\mathcal{I}_{0}\left(
w_{k}\right)  -\mathcal{I}_{k}\left(  w_{k}\right)  \rightarrow0$ as
$k\rightarrow\infty.$ From the convergence in $(\ref{(1.14)})$, we conclude
that $\lim\nolimits_{k\rightarrow\infty}\mathcal{I}_{0}\left(  w_{k}\right)
=c_{0}\left(  r\right)  $. Since the set $\mathrm{Lim}\sup W_{k}\left(
r\right)  $ is not empty, choose $\tilde{w}$ from this set, so that $\tilde
{w}$ is a cluster point of some sequence $\left\{  w_{k}\right\}  $ such that
$w_{k}\in W_{k}\left(  r\right)  $ for $k\in\mathbb{N}.$ Therefore, passing to
a subsequence, if necessary, we may assume that $w_{k}\rightarrow\tilde{w}$ as
$k\rightarrow\infty.$ Suppose that $\tilde{w}\notin W_{0}\left(  r\right)  ,$
i.e. $\mathcal{I}_{0}\left(  \tilde{w}\right)  \neq c_{0}\left(  r\right)  $
or $\mathcal{I}_{0}^{\prime}\left(  \tilde{w}\right)  \neq0$. Let us observe
that the second condition is false. Indeed, assumption $\left(  c\right)  $
and the first part of our proof\textbf{\ }allow us to write
\[
\mathcal{I}_{0}^{\prime}\left(  \tilde{w}\right)  =\lim\limits_{k\rightarrow
\infty}\left(  \mathcal{I}_{0}^{\prime}\left(  w_{k}\right)  -\mathcal{I}%
_{k}^{\prime}\left(  w_{k}\right)  \right)  =0.
\]
By putting $\delta=\mathcal{I}_{0}\left(  \tilde{w}\right)  -\mathcal{I}%
_{0}\left(  w_{0}\right)  ,$ where $w_{0}\in W_{0}\left(  r\right)  $ and
$\delta\neq0,$ we arrive at
\[
c_{k}\left(  r\right)  -c_{0}\left(  r\right)  =\left[  \mathcal{I}_{k}\left(
w_{k}\right)  -\mathcal{I}_{0}\left(  w_{k}\right)  \right]  +\left[
\mathcal{I}_{0}\left(  w_{k}\right)  -\mathcal{I}_{0}\left(  \tilde{w}\right)
\right]  +\delta.
\]
From $(\ref{(1.14)})$ and by $\left(  a\right)  $ and $\left(  c\right)  ,$ we
have that $c_{k}\left(  r\right)  -c_{0}\left(  r\right)  \rightarrow0,$
$\mathcal{I}_{k}\left(  w_{k}\right)  -\mathcal{I}_{0}\left(  w_{k}\right)
\rightarrow0$ and $\mathcal{I}_{0}\left(  w_{k}\right)  -\mathcal{I}%
_{0}\left(  \tilde{w}\right)  \rightarrow0$ as $k\rightarrow\infty.$ This
contradicts the fact that $\delta\neq0.$ Thus $\tilde{w}\in W_{0}\left(
r\right)  $ and consequently $\mathrm{Lim}\sup W_{k}\left(  r\right)  \subset
W_{0}\left(  r\right)  ,$ which concludes the proof.
\end{proof}

What we need at this point of our consideration is to examine a specific form
of the functional $\mathcal{I}_{k}$ derived from the functional of action
given by $(\ref{akcja1}).$

Let $\left\{  v_{k}\right\}  $ be a sequence of boundary data and $\left\{
u_{k}\right\}  $ a sequence of parameters such that $\left\{  v_{k}\right\}
\in\mathcal{V}$, $\left\{  u_{k}\right\}  \in\mathcal{U},$ for $k\in
\mathbb{N}_{0}$. Furthermore, let $\left\{  \mathcal{F}_{k}\right\}  $ stand
for the sequence of functionals of the form%
\begin{align}
\mathcal{F}_{k}\left(  w\right)   &  =F_{u_{k},v_{k}}\left(  w\right)
+\int\limits_{\Omega}G\left(  x,v_{k}\left(  x\right)  ,u_{k}\left(  x\right)
\right)  dx\label{akcja2}\\
&  =c\left(  n,\alpha\right)  \left(  \int\limits_{Q}\frac{\left\vert w\left(
x\right)  -w\left(  y\right)  \right\vert ^{2}}{2\left\vert x-y\right\vert
^{n+\alpha}}dxdy+\int\limits_{Q}\frac{\left(  v_{k}\left(  x\right)
-v_{k}\left(  y\right)  \right)  \left(  w\left(  x\right)  -w\left(
y\right)  \right)  }{\left\vert x-y\right\vert ^{n+\alpha}}dxdy\right)
\nonumber\\
&  -\int\limits_{\Omega}G\left(  x,\left(  w+v_{k}\right)  \left(  x\right)
,u_{k}\left(  x\right)  \right)  dx+\int\limits_{\Omega}G\left(
x,v_{k}\left(  x\right)  ,u_{k}\left(  x\right)  \right)  dx\nonumber
\end{align}
for which we define the value
\begin{equation}
c_{k}=\inf\limits_{g\in M}\max\limits_{t\in\left[  0,1\right]  }%
\mathcal{F}_{k}\left(  g\left(  t\right)  \right)  , \label{(1.10)}%
\end{equation}
where
\[
M=\left\{  g\in C\left(  \left[  0,1\right]  ,X_{0}^{\alpha/2}\right)
:g\left(  0\right)  =\omega_{0},\text{ }g\left(  1\right)  =\omega
_{1}\right\}
\]
and $\omega_{0},\omega_{1}\in X_{0}^{\alpha/2}.$

Here and throughout the paper, for $k\in\mathbb{N}_{0},$ let $W_{k}$ denotes
the set of critical points corresponding to the value $c_{k}$, that is, the
set of the form
\begin{equation}
W_{k}=\left\{  w\in X_{0}^{\alpha/2}:\mathcal{F}_{k}\left(  w\right)
=c_{k}\text{ and }\mathcal{F}_{k}^{\prime}\left(  w\right)  =0\right\}  .
\label{(1.11)}%
\end{equation}
In Section \ref{mainresult}, we shall prove that for each $k\in\mathbb{N}$,
the set $W_{k}$ is not empty and the sequence of sets $\left\{  W_{k}\right\}
$ possesses nonempty upper limit in $X_{0}^{\alpha/2}$ such that
$\mathrm{Lim}\sup W_{k}\subset W_{0}.$ In the proof of that results we need
the following lemma in which the boundedness of the sequence $\left\{
W_{k}\right\}  $ is claimed.

\begin{lemma}
If the function $G$ satisfies conditions $(C1)-(C3)$, then for any boundary
data $v_{k}\in\mathcal{V}$ and for any parameter $u_{k}\in\mathcal{U}$ there
exists a ball $B_{\rho}=\left\{  w\in X_{0}^{\alpha/2}:\left\Vert w\right\Vert
_{X_{0}^{\alpha/2}}<\rho\right\}  \ $with $\rho>0$ such that $W_{k}\subset
B_{\rho}.$\label{lemat12}
\end{lemma}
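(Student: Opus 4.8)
The plan is to use the Ambrosetti--Rabinowitz boundedness device, adapted to the nonhomogeneous boundary term. Fix $k$ and take any $w\in W_{k}$, so that $\mathcal{F}_{k}\left(  w\right)  =c_{k}$ and $\mathcal{F}_{k}^{\prime}\left(  w\right)  =0$; I abbreviate $z=w+v_{k}$. First I would record a bound $c_{k}\leq C_{1}$ that is uniform over admissible data: since $\omega_{0},\omega_{1}$ are fixed, evaluating the definition $\left(  \ref{(1.10)}\right)$ on the single affine path $g_{\ast}\left(  t\right)  =\left(  1-t\right)  \omega_{0}+t\omega_{1}$ and invoking the growth bound $\left(  C2\right)$ together with the boundedness of $\mathcal{V}$ in $Y^{\alpha/2}$ and of $\mathcal{U}$ in $L^{\infty}$ (and the embedding $X^{\alpha/2}\subset H^{\alpha/2}\left(  \Omega\right)  \hookrightarrow L^{s}\left(  \Omega\right)$, which makes $\sup_{v\in\mathcal{V}}\left\Vert v\right\Vert _{L^{s}\left(  \Omega\right)}$ finite) shows $\max_{t}\mathcal{F}_{k}\left(  g_{\ast}\left(  t\right)  \right)$ is bounded independently of $k$.

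The heart of the argument is to estimate $\mathcal{F}_{k}\left(  w\right)  -\frac{1}{\mu}\mathcal{F}_{k}^{\prime}\left(  w\right)  w$ for a multiplier $\mu$ chosen \emph{strictly} between $2$ and $p$; since $\mathcal{F}_{k}^{\prime}\left(  w\right)  =0$ this equals $c_{k}$. Expanding, the quadratic part contributes $c\left(  n,\alpha\right)  \left(  \frac{1}{2}-\frac{1}{\mu}\right)  \left\Vert w\right\Vert _{X_{0}^{\alpha/2}}^{2}$ with strictly positive coefficient; the mixed Gagliardo term is controlled by $c\left(  n,\alpha\right)  \left(  1-\frac{1}{\mu}\right)  \left\Vert w\right\Vert _{X_{0}^{\alpha/2}}[v_{k}]_{n,\alpha}$ via Cauchy--Schwarz, hence is at most linear in $\left\Vert w\right\Vert _{X_{0}^{\alpha/2}}$ because $[v_{k}]_{n,\alpha}$ is bounded over $\mathcal{V}$; and, writing $w=z-v_{k}$, the nonlinear part reorganizes as $\int_{\Omega}\left[  \frac{1}{\mu}G_{z}\left(  x,z,u_{k}\right)  z-G\left(  x,z,u_{k}\right)  \right]  dx-\frac{1}{\mu}\int_{\Omega}G_{z}\left(  x,z,u_{k}\right)  v_{k}\,dx+\int_{\Omega}G\left(  x,v_{k},u_{k}\right)  dx$.

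I would then split $\Omega=\Omega_{1}\cup\Omega_{2}$ with $\Omega_{1}=\left\{  x:\left\vert z\left(  x\right)  \right\vert \geq R\right\}$ and $\Omega_{2}=\left\{  x:\left\vert z\left(  x\right)  \right\vert <R\right\}$. On $\Omega_{1}$, condition $\left(  C3\right)$ gives $\frac{1}{\mu}G_{z}\left(  x,z,u_{k}\right)  z-G\left(  x,z,u_{k}\right)  \geq\left(  \frac{1}{\mu}-\frac{1}{p}\right)  G_{z}\left(  x,z,u_{k}\right)  z$, a nonnegative multiple of $G_{z}z$ \emph{precisely because} $\mu<p$; on $\Omega_{2}$ the integrand is bounded by a constant through $\left(  C2\right)$. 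The genuinely delicate term is the cross term $\int_{\Omega}G_{z}\left(  x,z,u_{k}\right)  v_{k}\,dx$: the naive bound $\left\vert G_{z}\right\vert \leq c\left(  1+\left\vert z\right\vert ^{s-1}\right)$ only controls it by $\left\Vert w\right\Vert ^{s-1}$, which cannot be absorbed into the quadratic term once $s>3$, and this is the main obstacle. I would resolve it by an absorption on $\Omega_{1}$, where $\left(  C2\right)$ and $\left\vert z\right\vert \geq R$ yield $\left\vert G_{z}\left(  x,z,u_{k}\right)  \right\vert \leq C\left(  G_{z}\left(  x,z,u_{k}\right)  z\right)  ^{\left(  s-1\right)  /s}$; Young's inequality then gives $\frac{1}{\mu}\left\vert G_{z}\left(  x,z,u_{k}\right)  v_{k}\right\vert \leq\varepsilon G_{z}\left(  x,z,u_{k}\right)  z+C_{\varepsilon}\left\vert v_{k}\right\vert ^{s}$, so after integrating over $\Omega_{1}$ and bounding the remainder on $\Omega_{2}$ by $C\left\Vert v_{k}\right\Vert _{L^{1}\left(  \Omega\right)}$, the cross term is dominated by $\varepsilon\int_{\Omega_{1}}G_{z}\left(  x,z,u_{k}\right)  z\,dx$ plus a constant uniform over $\mathcal{V}$.

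Finally, choosing $\varepsilon<1-\mu/p$ keeps the net coefficient of $\int_{\Omega_{1}}G_{z}z\,dx$ nonnegative, so that term only helps and may be discarded. What survives is a quadratic inequality $c\left(  n,\alpha\right)  \left(  \frac{1}{2}-\frac{1}{\mu}\right)  \left\Vert w\right\Vert _{X_{0}^{\alpha/2}}^{2}\leq C_{1}+C\left\Vert w\right\Vert _{X_{0}^{\alpha/2}}+C^{\prime}$, in which every constant depends only on the structural data in $\left(  C1\right)  -\left(  C3\right)$ and on the radii $l_{1},l_{2}$ of $\mathcal{V}$ and $\mathcal{U}$. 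This forces $\left\Vert w\right\Vert _{X_{0}^{\alpha/2}}\leq\rho$ for a $\rho$ independent of $k$ and of the particular critical point, giving $W_{k}\subset B_{\rho}$ as claimed; the uniformity in $k$ is exactly what the later limit argument needs.
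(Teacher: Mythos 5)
Your proof is correct, and while it shares the paper's overall skeleton --- a uniform upper bound on the mountain-pass levels $c_{k}$ obtained by evaluating along the affine path $(1-t)\omega_{0}+t\omega_{1}$, an Ambrosetti--Rabinowitz estimate at critical points, the splitting of $\Omega$ by the level $R$ from $(C3)$, and a concluding quadratic inequality in $\left\Vert w\right\Vert _{X_{0}^{\alpha/2}}$ --- it diverges from the paper at the key step. The paper forms $p\mathcal{F}_{k}\left(  w\right)  -\left\langle \mathcal{F}_{k}^{\prime}\left(  w\right)  ,w+v_{k}\right\rangle $, i.e.\ it tests the derivative against $w+v_{k}$, so that $(C3)$ applies directly to $G_{w}\left(  x,w+v_{k},u_{k}\right)  \left(  w+v_{k}\right)  $ and the cross term $\int_{\Omega}G_{w}\left(  x,w+v_{k},u_{k}\right)  v_{k}\,dx$ never appears; the price is the extra mixed Gagliardo term with coefficient $c\left(  n,\alpha\right)  \left(  p-2\right)  $ and the fact that $w+v_{k}\notin X_{0}^{\alpha/2}$, so the pairing must be read through the integral formulas extended to $Y^{\alpha/2}$. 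You instead test only against $w\in X_{0}^{\alpha/2}$ --- the functionally cleaner choice --- and then confront the cross term head-on, disposing of it via the pointwise inequality $\left\vert G_{z}\right\vert \leq C\left(  G_{z}z\right)  ^{\left(  s-1\right)  /s}$ on $\left\{  \left\vert z\right\vert \geq R\right\}  $, which is indeed valid: by $(C3)$ one has $G_{z}z>a>0$ there, hence $G_{z}z=\left\vert G_{z}\right\vert \left\vert z\right\vert $, while $(C2)$ gives $G_{z}z\leq C\left\vert z\right\vert ^{s}$, so $\left\vert G_{z}\right\vert =G_{z}z/\left\vert z\right\vert \leq C^{1/s}\left(  G_{z}z\right)  ^{\left(  s-1\right)  /s}$; combined with Young's inequality this absorbs the cross term into the good term $\int_{\Omega_{1}}G_{z}z\,dx$ at the cost of $C_{\varepsilon}\left\Vert v_{k}\right\Vert _{L^{s}}^{s}$, which is uniformly bounded over $\mathcal{V}$. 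This absorption device is absent from the paper and is your genuinely new ingredient; your use of a multiplier $\mu\in\left(  2,p\right)  $ in place of $p$ itself is immaterial. One small repair: after absorption, the coefficient available in front of $\int_{\Omega_{1}}G_{z}z\,dx$ is $\frac{1}{\mu}-\frac{1}{p}=\frac{1}{\mu}\left(  1-\frac{\mu}{p}\right)  $, not $1-\frac{\mu}{p}$, so your stated threshold $\varepsilon<1-\mu/p$ is too generous by a factor of $\mu$ and should read $\varepsilon<\frac{1}{\mu}-\frac{1}{p}$ --- a bookkeeping slip with no effect on the structure or the conclusion that $\rho$ is uniform in $k$, $v_{k}\in\mathcal{V}$ and $u_{k}\in\mathcal{U}$.
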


\begin{proof}
First of all, let us observe that the set of values $\left\{  c_{k}:v_{k}%
\in\mathcal{V},\text{ }u_{k}\in\mathcal{U}\right\}  $ is bounded from above.
Indeed, for any $k\in\mathbb{N}$ and $g\left(  t\right)  =(1-t)\omega
_{0}+t\omega_{1}$ on $\left[  0,1\right]  ,$ conditions $(C2)$, $(C3)$ enable
us to infer that
\begin{align*}
c_{k}  &  =\inf\limits_{g\in M}\max\limits_{t\in\left[  0,1\right]
\ }\mathcal{F}_{k}\left(  g\left(  t\right)  \right)  \leq\max\limits_{t\in
\left[  0,1\right]  }\mathcal{F}_{k}\left(  (1-t)\omega_{0}+t\omega_{1}\right)
\\
&  \leq\max\limits_{t\in\left[  0,1\right]  }\left(  2c\left(  n,\alpha
\right)  \int\limits_{Q}\frac{(1-t)^{2}\left\vert \omega_{0}\left(  x\right)
-\omega_{0}\left(  y\right)  \right\vert ^{2}+t^{2}\left\vert \omega
_{1}\left(  x\right)  -\omega_{1}\left(  y\right)  \right\vert ^{2}%
}{\left\vert x-y\right\vert ^{n+\alpha}}dxdy\right. \\
&  +\left.  \frac{c\left(  n,\alpha\right)  }{2}\int\limits_{Q}\frac
{\left\vert v_{k}\left(  x\right)  -v_{k}\left(  y\right)  \right\vert ^{2}%
}{\left\vert x-y\right\vert ^{n+\alpha}}dxdy+\int\limits_{\Omega}G\left(
x,v_{k},u_{k}\right)  dx\right. \\
&  \left.  -\int\limits_{\Omega_{t}^{+}}G\left(  x,(1-t)\omega_{0}+t\omega
_{1}+v_{k},u_{k}\right)  dx-\int\limits_{\Omega_{t}^{-}}G\left(
x,(1-t)\omega_{0}+t\omega_{1}+v_{k},u_{k}\right)  dx\right) \\
&  \leq\max\limits_{t\in\left[  0,1\right]  }\left(  2c\left(  n,\alpha
\right)  (1-t)^{2}\left\Vert \omega_{0}\right\Vert _{X_{0}^{\alpha/2}}%
^{2}+2c\left(  n,\alpha\right)  t^{2}\left\Vert \omega_{1}\right\Vert
_{X_{0}^{\alpha/2}}^{2}-\frac{a}{p}\left\vert \Omega_{t}^{+}\right\vert
+c\left(  1+R^{s}\right)  \left\vert \Omega_{t}^{-}\right\vert \right) \\
&  +\frac{c\left(  n,\alpha\right)  }{2}\left\Vert v_{k}\right\Vert
_{Y^{\alpha/2}}^{2}+c\left\vert \Omega\right\vert +d\left\Vert v_{k}%
\right\Vert _{Y^{\alpha/2}}^{s}\\
&  \leq2c\left(  n,\alpha\right)  \max\left\{  \left\Vert \omega
_{0}\right\Vert _{X_{0}^{\alpha/2}}^{2},\left\Vert \omega_{1}\right\Vert
_{X_{0}^{\alpha/2}}^{2}\right\}  +D\leq\bar{c},
\end{align*}
and therefore
\[
c_{k}\leq\bar{c}%
\]
where $D,$ $c,$ $d,$ $\bar{c}$ are some constants, and the sets $\Omega
_{t}^{+},$ $\Omega_{t}^{-}$ are defined as
\[
\Omega_{t}^{+}=\left\{  x\in\Omega:\left\vert \left(  g\left(  t\right)
+v_{k}\right)  \left(  x\right)  \right\vert \geq R\right\}  ;\Omega_{t}%
^{-}=\left\{  x\in\Omega:\left\vert \left(  g\left(  t\right)  +v_{k}\right)
\left(  x\right)  \right\vert <R\right\}  .
\]
Then, for any $v_{k}\in\mathcal{V}$, $u_{k}\in\mathcal{U}$ and $w\in W_{k}$ we
have\ by $\left(  C2\right)  $ and $\left(  C3\right)  $%
\begin{align*}
p\bar{c}  &  \geq pc_{k}=p\mathcal{F}_{k}\left(  w\right)  -\left\langle
\mathcal{F}_{k}^{\prime}\left(  w\right)  ,w+v_{k}\right\rangle \\
&  =c\left(  n,\alpha\right)  \tfrac{p-2}{2}\left\Vert w\right\Vert
_{X_{0}^{\alpha/2}}^{2}+c\left(  n,\alpha\right)  \left(  p-2\right)
\int\limits_{Q}\frac{\left(  w(x\right)  -w(y))\left(  v_{k}\left(  x\right)
-v_{k}\left(  y\right)  \right)  }{\left\vert x-y\right\vert ^{n+\alpha}%
}dxdy\\
&  -c\left(  n,\alpha\right)  \int\limits_{Q}\frac{\left\vert v_{k}\left(
x\right)  -v_{k}\left(  y\right)  \right\vert ^{2}}{\left\vert x-y\right\vert
^{n+\alpha}}dxdy\\
&  +\int\limits_{\Omega}\left(  -pG\left(  x,w+v_{k},u_{k}\right)  +pG\left(
x,v_{k},u_{k}\right)  +G_{w}\left(  x,w+v_{k},u_{k}\right)  \left(
w+v_{k}\right)  \right)  dx\\
&  \geq c\left(  n,\alpha\right)  \left(  \frac{p-2}{2}\left\Vert w\right\Vert
_{X_{0}^{\alpha/2}}^{2}-\left(  p-2\right)  \left\Vert w\right\Vert
_{H_{0}^{\alpha/2}}\left[  v_{k}\right]  _{n,\alpha}-\left[  v_{k}\right]
_{n,\alpha}^{2}\right) \\
&  +\int\limits_{\Omega^{+}}\left(  -pG\left(  x,w+v_{k},u_{k}\right)
+pG\left(  x,v_{k},u_{k}\right)  +G_{w}\left(  x,w+v_{k},u_{k}\right)  \left(
w+v_{k}\right)  \right)  dx\\
&  +\int\limits_{\Omega^{-}}\left(  -pG\left(  x,w+v_{k},u_{k}\right)
+pG\left(  x,v_{k},u_{k}\right)  +G_{w}\left(  x,w+v_{k},u_{k}\right)  \left(
w+v_{k}\right)  \right)  dx\\
&  \geq c\left(  n,\alpha\right)  \left(  \frac{p-2}{2}\left\Vert w\right\Vert
_{X_{0}^{\alpha/2}}^{2}-\left(  p-2\right)  \left\Vert w\right\Vert
_{X_{0}^{\alpha/2}}\left[  v_{k}\right]  _{n,\alpha}-\left[  v_{k}\right]
_{n,\alpha}^{2}\right)  -pc\left\vert \Omega^{+}\right\vert -pc\left\Vert
v_{k}\right\Vert _{L^{s}\left(  \Omega\right)  }^{s}+d\\
&  \geq c\left(  n,\alpha\right)  \frac{p-2}{2}\left\Vert w\right\Vert
_{X_{0}^{\alpha/2}}^{2}-D_{1}\left\Vert w\right\Vert _{X_{0}^{\alpha/2}}-D_{2}%
\end{align*}
where $D_{1}$, $D_{2}$ are some positive constants and $\Omega^{+},$
$\Omega^{-}$ are some subsets of $\Omega$ of the form
\[
\Omega^{+}=\left\{  x\in\Omega:\left\vert \left(  w+v_{k}\right)  \left(
x\right)  \right\vert \geq R\right\}  ,\text{ }\Omega^{-}=\left\{  x\in
\Omega:\left\vert \left(  w+v_{k}\right)  \left(  x\right)  \right\vert
<R\right\}  .
\]
and $\left\langle \cdot,\cdot\right\rangle $ is a dual pair in $X_{0}%
^{\alpha/2}.$ Thus%
\begin{equation}
p\bar{c}\geq c\left(  n,\alpha\right)  \frac{p-2}{2}\left\Vert w\right\Vert
_{X_{0}^{\alpha/2}}^{2}+D_{1}\left\Vert w\right\Vert _{X_{0}^{\alpha/2}}%
+D_{2}. \label{3.6}%
\end{equation}
By condition $\left(  C3\right)  ,$ $p-2>0,$ and consequently there exists
$\rho>0$ such that $w\in B_{\rho}.$ Hence, $W_{k}\subset B_{\rho}$ for any
$v_{k}\in\mathcal{V}$ and $u_{k}\in\mathcal{U},$ which completes the proof.
\end{proof}

After necessary shift resulting in $\left(  \ref{problem2}\right)  $ and
$\mathcal{F}_{k}$ defined by $\left(  \ref{akcja2}\right)  $, we can assume,
without loss of generality, that from this point $\omega_{0}=0.$ It is
possible to demonstrate that there exist a bounded neighborhood $B$ of
$\omega_{0}$ in $X_{0}^{\alpha/2}$ and some point $\omega_{1}\notin
\overline{B}$ such that the assumptions of Theorem \ref{MPT} are fulfilled for
$\mathcal{F}_{k}$. Indeed, we have the following lemma.

\begin{lemma}
Suppose that conditions $\left(  C1\right)  -\left(  C4\right)  $ are
satisfied, the sequence $\left\{  v_{k}\right\}  \subset\mathcal{V}$ tends to
$v_{0}$ in $Y^{\alpha/2}$ and the sequence $\left\{  u_{k}\right\}
\subset\mathcal{U}$ tends to $u_{0}$ in $L^{\infty}$. Then for any
sufficiently large $k\in\mathbb{N}$, $v_{k}\in\mathcal{V}$ and $u_{k}%
\in\mathcal{U},$ there exist a ball $B_{\eta}\subset X_{0}^{\alpha/2}$ and an
element $\omega_{1}\notin\overline{B_{\eta}}$ such that $\,\inf\nolimits_{w\in
\partial B_{\eta}}\mathcal{F}_{k}\left(  w\right)  >0$ and $\mathcal{F}%
_{k}\left(  \omega_{1}\right)  <0,$ where $B_{\eta}=\left\{  w\in
X_{0}^{\alpha/2}:\left\Vert w\right\Vert _{Y^{\alpha/2}}<\eta\right\}  $ for
$\eta>0.\label{lemat13}$
\end{lemma}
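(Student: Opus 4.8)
The goal is to verify the geometric hypotheses (a) and (b) of the Mountain Pass Theorem for each functional $\mathcal{F}_k$, uniformly for large $k$. The plan is to split into two pieces: a \emph{local} estimate near $\omega_0=0$ showing that $\mathcal{F}_k$ stays positive on a small sphere $\partial B_\eta$, and a \emph{global} estimate showing $\mathcal{F}_k$ becomes negative along a suitable ray, which produces the far point $\omega_1$. The local estimate will rely on condition $(C4)$, which controls $G$ near zero, together with the smallness of $v_k$ (since $v_k\to v_0$ and $\mathrm{ess}\sup|v_0|<\zeta$). The global estimate will rely on the superlinearity condition $(C3)$, which forces $G$ to grow faster than quadratically, so that the negative term $-\int_\Omega G$ dominates the quadratic form for large arguments.

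**Local positivity on $\partial B_\eta$.** For $w$ with $\|w\|_{Y^{\alpha/2}}=\eta$ small, I would estimate $\mathcal{F}_k(w)$ from below. The quadratic main term gives $\tfrac{c(n,\alpha)}{2}\|w\|_{X_0^{\alpha/2}}^2$, while the cross term $c(n,\alpha)\int_Q \tfrac{(v_k(x)-v_k(y))(w(x)-w(y))}{|x-y|^{n+\alpha}}\,dxdy$ is controlled by Cauchy--Schwarz as $c(n,\alpha)[v_k]_{n,\alpha}[w]_{n,\alpha}$; but since $v_k\in(X_0^{\alpha/2})^\perp$, this cross term actually \emph{vanishes} by the orthogonal decomposition $(\ref{suma})$, which is the reason for that assumption. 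The delicate part is the integral term. Using $(C4)$ to write $-G(x,w+v_k,u_k)$ in terms of $\tfrac{c(n,\alpha)}{2}(w+v_k)^2$ up to an error bounded by $\tfrac{b}{2}|w+v_k-v_0|^2$, one sees the quadratic contributions nearly cancel, leaving a positive remainder of size $(\tfrac{c(n,\alpha)}{2}-b)\|w\|^2$ plus lower-order terms controlled by $\|v_k-v_0\|_{Y^{\alpha/2}}\to 0$. Here one must restrict to the set where $|w+v_k|\le\zeta$; on the complementary set the growth bound $(C2)$ and smallness of the measure (by continuity of the embedding and smallness of $\eta$) keep the contribution negligible. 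Choosing $\eta$ small enough and $k$ large enough then yields $\inf_{\partial B_\eta}\mathcal{F}_k>0$.

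**Global negativity and the point $\omega_1$.** I would fix any $w_*\in X_0^{\alpha/2}$ with $w_*\ge 0$ and $[w_*]_{n,\alpha}>0$, and consider $\mathcal{F}_k(tw_*)$ as $t\to+\infty$. The quadratic part grows like $t^2$, but $(C3)$ gives $G(x,z,u)\ge a_0|z|^p$ for $|z|\ge R$ with $p>2$ (integrating the inequality $pG\le zG_z$), so $-\int_\Omega G(x,tw_*+v_k,u_k)\,dx\to-\infty$ like $-t^p$. Since $p>2$, the integral term dominates and $\mathcal{F}_k(tw_*)\to-\infty$; choosing $t$ large gives $\omega_1=tw_*$ with $\mathcal{F}_k(\omega_1)<0$ and $\|\omega_1\|_{Y^{\alpha/2}}>\eta$, so $\omega_1\notin\overline{B_\eta}$.

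**Main obstacle.** The hard part is the local estimate, specifically handling the boundary data $v_k$ inside condition $(C4)$, which is phrased with the centering $z-v_0$ rather than $z$ alone, and controlling the region $\{|w+v_k|>\zeta\}$ where $(C4)$ does not apply. The uniformity in $k$ must be extracted from the convergences $v_k\to v_0$ in $Y^{\alpha/2}$ and $u_k\to u_0$ in $L^\infty$, so one needs the error terms to be estimated by quantities tending to zero with these convergences; making the threshold $\eta$ and the index $k$ choices compatible and independent of the particular $w$ on the sphere is the technical crux.
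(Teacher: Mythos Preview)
Your proposal is correct in overall strategy and matches the paper closely for the global negativity part: the paper likewise fixes a nonzero $w\in X_0^{\alpha/2}$, integrates $(C3)$ to obtain $G(x,z,u)\ge a_0|z|^p-a_1$ for all $z$, and sends $l\to\infty$ to drive $\mathcal{F}_k(lw)\to-\infty$ uniformly over $v_k\in\mathcal{V}$, $u_k\in\mathcal{U}$. The local estimate, however, is handled differently. Rather than splitting $\Omega$ into $\{|w+v_k|\le\zeta\}$ and its complement as you propose, the paper first merges $(C2)$ and $(C4)$ into a single inequality valid for \emph{all} $z\in\mathbb{R}$:
\[
\Bigl|G(x,z,u)+\tfrac{c(n,\alpha)}{2}z^{2}\Bigr|\le \tfrac{b}{2}\,|z-v_0(x)|^{2}+a_2\,|z-v_0(x)|^{s},\qquad s\in(2,2_\alpha^\ast).
\]
Applying this to $z=w+v_k$ and $z=v_k$, and using the orthogonality $(w,v_k)_{Y^{\alpha/2}}=0$ exactly as you note, yields directly
\[
\mathcal{F}_k(w)\ge\Bigl(\tfrac{c(n,\alpha)}{2}-b\Bigr)\|w\|_{Y^{\alpha/2}}^{2}-C_1\|w\|_{Y^{\alpha/2}}^{s}-C_2\|v_k-v_0\|_{Y^{\alpha/2}}^{2}-C_3\|v_k-v_0\|_{Y^{\alpha/2}}^{s},
\]
from which the choice of $\eta$ and of large $k$ is immediate since $s>2$ and $v_k\to v_0$. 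Your domain-splitting route can be carried through (using $\mathrm{ess}\sup|v_0|<\zeta$ and Chebyshev to bound the measure of the bad set, then H\"older with the subcritical embedding), but it demands more bookkeeping to show the bad-set contribution is $o(\|w\|^{2})$ uniformly in $k$; the paper's device of absorbing the region $\{|z|>\zeta\}$ into the extra $|z-v_0|^{s}$ term is the standard shortcut and bypasses that detour entirely.
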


\begin{proof}
From $\left(  C3\right)  ,$ in a similar fashion as in the proof of Lemma 4 in
\cite{SerVal}, there exists a constant $a_{0}>0$ such that
\[
G\left(  x,z,u\right)  \geq a_{0}\left\vert z\right\vert ^{p},
\]
for any $\left\vert z\right\vert \geq R,$ $u\in U$, a.e. $x\in\Omega$ and
$R>0$ as in $\left(  C3\right)  $ with $p>2$. On combining the above
inequality with condition $\left(  C2\right)  $ we deduce that there exists a
positive constant $a_{1}$ such that
\begin{equation}
G\left(  x,z,u\right)  \geq a_{0}\left\vert z\right\vert ^{p}-a_{1}
\label{(*)}%
\end{equation}
for $z\in\mathbb{R},$ $u\in U$ and a.e. $x\in\Omega$, with $p\in\left(
2,2_{\alpha}^{\ast}\right)  $. Moreover, by $\left(  C2\right)  $, $\left(
C4\right)  $ in the same way as in the proof of Lemma 3 in \cite{SerVal2}
there exist $b\in\left(  0,\frac{c\left(  n,\alpha\right)  }{2}\right)  $\ and
$a_{2}>0$\ such that
\begin{equation}
\left\vert G\left(  x,z,u\right)  +\frac{c\left(  n,\alpha\right)  }{2}%
z^{2}\right\vert \leq\frac{b}{2}\left\vert z-v_{0}\left(  x\right)
\right\vert ^{2}+a_{2}\left\vert z-v_{0}\left(  x\right)  \right\vert ^{s}
\label{(**)}%
\end{equation}
\ for any $z\in\mathbb{R}$, $u\in U,$ a.e.\ $x\in\Omega$\ and\textbf{ }%
$s\in\left(  2,2_{\alpha}^{\ast}\right)  .$ \newline For fixed $k\in
\mathbb{N},$ the orthogonality condition $\left(  w,v_{k}\right)
_{Y^{\alpha/2}}=0$ and inequality in $\left(  \ref{(**)}\right)  $ lead to the
following estimate%
\begin{align*}
\mathcal{F}_{k}\left(  w\right)   &  =c\left(  n,\alpha\right)  \left(
\int\limits_{Q}\frac{\left\vert w\left(  x\right)  -w\left(  y\right)
\right\vert ^{2}}{2\left\vert x-y\right\vert ^{n+\alpha}}dxdy+\int
\limits_{Q}\frac{\left(  v_{k}\left(  x\right)  -v_{k}\left(  y\right)
\right)  \left(  w\left(  x\right)  -w\left(  y\right)  \right)  }{\left\vert
x-y\right\vert ^{n+\alpha}}dxdy\right) \\
&  -\int\limits_{\Omega}\left(  G\left(  x,w+v_{k},u_{k}\right)  -G\left(
x,v_{k},u_{k}\right)  \right)  dx\\
&  =\tfrac{c\left(  n,\alpha\right)  }{2}\left\Vert w\right\Vert
_{Y^{\alpha/2}}^{2}+c\left(  n,\alpha\right)  \left(  w,v_{k}\right)
_{Y^{\alpha/2}}\\
&  -\int\limits_{\Omega}\left(  G\left(  x,w+v_{k},u_{k}\right)
+\tfrac{c\left(  n,\alpha\right)  }{2}\left(  w+v_{k}\right)  ^{2}\right)
dx+\int\limits_{\Omega}\left(  G\left(  x,v_{k},u_{k}\right)  +\tfrac{c\left(
n,\alpha\right)  }{2}v_{k}^{2}\right)  dx\\
&  \geq\left(  \tfrac{c\left(  n,\alpha\right)  }{2}-b\right)  \left\Vert
w\right\Vert _{Y^{\alpha/2}}^{2}-C_{1}\left\Vert w\right\Vert _{Y^{\alpha/2}%
}^{s}-C_{2}\left\Vert v_{k}-v_{0}\right\Vert _{Y^{\alpha/2}}^{2}%
-C_{3}\left\Vert v_{k}-v_{0}\right\Vert _{Y^{\alpha/2}}^{s}%
\end{align*}
where constants $C_{1},$ $C_{2},$ $C_{3}$ are positive. Since, by $\left(
C4\right)  ,$ $b<\frac{c\left(  n,\alpha\right)  }{2}$ and $v_{k}\rightarrow
v_{0}$ in $Y^{\alpha/2}$ while $s>2,$ it follows that there exists a constant
$\eta>0$ such that $\inf\nolimits_{w\in\partial B_{\eta}}\mathcal{F}%
_{k}\left(  w\right)  \geq\varepsilon>0$ for any $k$ sufficiently large$.$

To finish the proof it is enough to show that for any $v_{k}\in\mathcal{V}$
and $u_{k}\in\mathcal{U}$ there exists $\omega_{1}\notin\overline{B_{\eta}}$
such that $\mathcal{F}_{k}\left(  \omega_{1}\right)  <0.$ For a fixed nonzero
$w\in X_{0}^{\alpha/2}$ and $l>0,$ from $\left(  C2\right)  $ and $\left(
C3\right)  $ hence $\left(  \ref{(*)}\right)  $\ we have the following
estimates
\begin{align*}
\mathcal{F}_{k}\left(  lw\right)   &  =c\left(  n,\alpha\right)  \left(
\int\limits_{Q}\frac{l^{2}\left\vert w\left(  x\right)  -w\left(  y\right)
\right\vert ^{2}}{2\left\vert x-y\right\vert ^{n+\alpha}}dxdy+\int
\limits_{Q}\frac{l\left(  v_{k}\left(  x\right)  -v_{k}\left(  y\right)
\right)  \left(  w\left(  x\right)  -w\left(  y\right)  \right)  }{\left\vert
x-y\right\vert ^{n+\alpha}}dxdy\right) \\
&  -\int\limits_{\Omega}\left(  G\left(  x,lw+v_{k},u_{k}\right)  -G\left(
x,v_{k},u_{k}\right)  \right)  dx\\
&  \leq\tfrac{c\left(  n,\alpha\right)  }{2}l^{2}\left\Vert w\right\Vert
_{Y^{\alpha/2}}^{2}+c\left(  n,\alpha\right)  lC_{4}\left\Vert w\right\Vert
_{Y^{\alpha/2}}-\int\limits_{\Omega}\left(  a_{0}\left\vert lw+v_{k}%
\right\vert ^{p}-a_{1}\right)  dx+\int\limits_{\Omega}G\left(  x,v_{k}%
,u_{k}\right)  dx\\
&  \leq\tfrac{c\left(  n,\alpha\right)  }{2}l^{2}\left\Vert w\right\Vert
_{Y^{\alpha/2}}^{2}+c\left(  n,\alpha\right)  lC_{4}\left\Vert w\right\Vert
_{Y^{\alpha/2}}-a_{0}l^{p}\int\limits_{\Omega}\left(  \left\vert
w+\tfrac{v_{k}}{l}\right\vert ^{p}\right)  dx+C_{5}%
\end{align*}
where $C_{4},C_{5},a_{0}>0.$ Since $p\in\left(  2,2_{\alpha}^{\ast}\right)  $
and $a_{0}>0$ from $\left(  \ref{(*)}\right)  $ we get that $\lim
\nolimits_{l\rightarrow\infty}\mathcal{F}_{k}\left(  lw\right)  =-\infty.$
Therefore, there exists $l_{0}>0$ such that for $\omega_{1}=l_{0}w$ we have
$\left\Vert \omega_{1}\right\Vert _{Y^{\alpha/2}}\geq\eta$ and $\mathcal{F}%
_{k}\left(  \omega_{1}\right)  <0\,$\ for any $v_{k}\in\mathcal{V}$ and
$u_{k}\in\mathcal{U},$ which proves the assertion of the lemma.
\end{proof}

Lemmas \ref{lemat12} and \ref{lemat13} give that the geometry of Mountain Pass
Theorem is fulfilled by $\mathcal{F}_{k}.$ Therefore, in order to apply such
Mountain Pass Theorem, we are left with checking the validity of the
Palais-Smale condition. This will be accomplished in the forthcoming lemma.

\begin{lemma}
Suppose that conditions $\left(  C1\right)  -\left(  C4\right)  $ are
satisfied. Then for any $k\in\mathbb{N}_{0},$ $\mathcal{F}_{k}$ satisfies the
Palais-Smale condition.\label{Lemat_PS}
\end{lemma}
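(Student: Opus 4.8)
The plan is to fix $k\in\mathbb{N}_{0}$, take an arbitrary Palais--Smale sequence $\{w_{j}\}\subset X_{0}^{\alpha/2}$ for the fixed functional $\mathcal{F}_{k}$ (so $|\mathcal{F}_{k}(w_{j})|\leq C$ for all $j$ and $\mathcal{F}_{k}^{\prime}(w_{j})\to 0$ in $(X_{0}^{\alpha/2})^{\ast}$), and to produce a strongly convergent subsequence. I would organize this in three stages: boundedness of $\{w_{j}\}$, extraction of a weak limit together with strong $L^{s}$-convergence, and an upgrade from weak to strong convergence in $X_{0}^{\alpha/2}$.

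First I would show $\{w_{j}\}$ is bounded in $X_{0}^{\alpha/2}$ by repeating the computation of Lemma~\ref{lemat12}, with the critical-point identities there replaced by the Palais--Smale bounds. Forming $p\mathcal{F}_{k}(w_{j})-\langle\mathcal{F}_{k}^{\prime}(w_{j}),w_{j}+v_{k}\rangle$, the superlinearity condition $(C3)$ makes the leading nonlinear contributions combine into $\int_{\Omega}\big(G_{w}(x,w_{j}+v_{k},u_{k})(w_{j}+v_{k})-pG(x,w_{j}+v_{k},u_{k})\big)\,dx$, which is nonnegative on $\{|w_{j}+v_{k}|\geq R\}$ and bounded below on its complement by $(C2)$, while the remaining $v_{k}$-dependent terms are, for the fixed $k$, either linear in $\|w_{j}\|_{X_{0}^{\alpha/2}}$ or constant. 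Since $|\mathcal{F}_{k}(w_{j})|\leq C$ and $|\langle\mathcal{F}_{k}^{\prime}(w_{j}),w_{j}\rangle|=o(\|w_{j}\|_{X_{0}^{\alpha/2}})$, this reproduces the coercive estimate $(\ref{3.6})$ up to an additive $o(\|w_{j}\|_{X_{0}^{\alpha/2}})$ error, and because $p>2$ the quadratic term dominates, forcing $\sup_{j}\|w_{j}\|_{X_{0}^{\alpha/2}}<\infty$. As $X_{0}^{\alpha/2}$ is a Hilbert space, a subsequence (not relabelled) satisfies $w_{j}\rightharpoonup w^{\ast}$ weakly in $X_{0}^{\alpha/2}$, and by the compact embedding $(\ref{embeding})$ into $L^{s}(\Omega)$ for $s\in(2,2_{\alpha}^{\ast})$ we may also assume $w_{j}\to w^{\ast}$ strongly in $L^{s}(\Omega)$ and a.e. in $\Omega$.

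The decisive stage is to test the difference of derivatives against $w_{j}-w^{\ast}$. Because the $v_{k}$-contributions in $\mathcal{F}_{k}^{\prime}$ are linear, they cancel in $\mathcal{F}_{k}^{\prime}(w_{j})-\mathcal{F}_{k}^{\prime}(w^{\ast})$, leaving
\begin{align*}
c(n,\alpha)\|w_{j}-w^{\ast}\|_{X_{0}^{\alpha/2}}^{2} &=\langle\mathcal{F}_{k}^{\prime}(w_{j})-\mathcal{F}_{k}^{\prime}(w^{\ast}),w_{j}-w^{\ast}\rangle\\
&\quad+\int_{\Omega}\big(G_{w}(x,w_{j}+v_{k},u_{k})-G_{w}(x,w^{\ast}+v_{k},u_{k})\big)(w_{j}-w^{\ast})\,dx.
\end{align*}
In the first term, $\langle\mathcal{F}_{k}^{\prime}(w_{j}),w_{j}-w^{\ast}\rangle\to0$ since $\mathcal{F}_{k}^{\prime}(w_{j})\to0$ in the dual while $\{w_{j}-w^{\ast}\}$ is bounded, and $\langle\mathcal{F}_{k}^{\prime}(w^{\ast}),w_{j}-w^{\ast}\rangle\to0$ since $\mathcal{F}_{k}^{\prime}(w^{\ast})$ is a fixed bounded functional and $w_{j}-w^{\ast}\rightharpoonup0$. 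For the integral, the growth bound in $(C2)$ gives $|G_{w}(x,z,u_{k})|\leq c(1+|z|^{s-1})$, so the Nemytskii functions $G_{w}(\cdot,w_{j}+v_{k},u_{k})$ are bounded in $L^{s/(s-1)}(\Omega)$ (using boundedness of $\{w_{j}\}$ in $L^{s}(\Omega)$); by H\"{o}lder's inequality the integral is dominated by a bounded factor times $\|w_{j}-w^{\ast}\|_{L^{s}(\Omega)}\to0$. Hence both right-hand terms vanish, $\|w_{j}-w^{\ast}\|_{X_{0}^{\alpha/2}}\to0$, and the Palais--Smale condition follows for each $k\in\mathbb{N}_{0}$.

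I expect the main obstacle to be this last stage: one must split $\mathcal{F}_{k}^{\prime}$ into its principal quadratic part, which regenerates $c(n,\alpha)\|w_{j}-w^{\ast}\|_{X_{0}^{\alpha/2}}^{2}$, and its nonlinear part, and then show the latter is asymptotically compact. This is exactly where the compact embedding $(\ref{embeding})$ and the subcritical exponent $s<2_{\alpha}^{\ast}$ are indispensable, since they convert the mere weak convergence $w_{j}\rightharpoonup w^{\ast}$ into the strong $L^{s}$-convergence needed to annihilate the Nemytskii term. A secondary but delicate technical point is the bookkeeping of the nonhomogeneous data $v_{k}$ in the boundedness step; this is handled exactly as in Lemma~\ref{lemat12}, the key being that pairing against the shifted object $w_{j}+v_{k}$ makes the superlinear terms cancel cleanly so that only lower-order $v_{k}$-contributions survive.
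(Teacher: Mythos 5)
Your proposal is correct and takes essentially the same approach as the paper: boundedness of the Palais--Smale sequence via the estimate of Lemma \ref{lemat12} applied to $p\mathcal{F}_{k}\left(  w_{j}\right)  -\left\langle \mathcal{F}_{k}^{\prime}\left(  w_{j}\right)  ,w_{j}+v_{k}\right\rangle $, weak compactness plus the compact embedding $\left(  \ref{embeding}\right)  $ into $L^{s}\left(  \Omega\right)  $, and then the identity for $\left\langle \mathcal{F}_{k}^{\prime}\left(  w_{j}\right)  -\mathcal{F}_{k}^{\prime}\left(  w^{\ast}\right)  ,w_{j}-w^{\ast}\right\rangle $ combined with H\"{o}lder's inequality and the growth condition $\left(  C2\right)  $ to kill the Nemytskii term and upgrade weak to strong convergence. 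The only cosmetic deviation is that you track the derivative pairing as $o\left(  \left\Vert w_{j}\right\Vert _{X_{0}^{\alpha/2}}\right)  $ where the paper simply uses the uniform bound $\left\Vert \mathcal{F}_{k}^{\prime}\left(  w_{i}\right)  \right\Vert \leq C_{2}$; both give the same coercive inequality, and your mention of a.e.\ convergence is harmless but unnecessary.
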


\begin{proof}
Let $k$ be fixed and $\left\{  w_{i}\right\}  $ be a Palais-Smale sequence so
that $\left\{  \mathcal{F}_{k}\left(  w_{i}\right)  \right\}  $ is bounded and
$\mathcal{F}_{k}^{\prime}\left(  w_{i}\right)  \rightarrow0$ as $i\rightarrow
\infty.$ Thus, there exist constants $C_{1},$ $C_{2}>0$ such that $\left\vert
\mathcal{F}_{k}\left(  w_{i}\right)  \right\vert \leq C_{1}$ and $\left\Vert
\mathcal{F}_{k}^{\prime}\left(  w_{i}\right)  \right\Vert \leq C_{2}$ for all
$i\in\mathbb{N}$. In the same manner as in the proof of Lemma \ref{lemat12},
we obtain the following estimates%
\begin{align*}
pC_{1}+C_{2}\left\Vert v_{k}\right\Vert _{Y^{\alpha/2}}+C_{3}\left\Vert
w_{i}\right\Vert _{X_{0}^{\alpha/2}}  &  \geq pC_{1}+C_{2}\left\Vert
w_{i}+v_{k}\right\Vert _{Y^{\alpha/2}}\\
&  \geq p\mathcal{F}_{k}\left(  w_{i}\right)  -\left\langle \mathcal{F}%
_{k}^{\prime}\left(  w_{i}\right)  ,w_{i}+v_{k}\right\rangle \\
&  \geq c\left(  n,\alpha\right)  \tfrac{p-2}{2}\left\Vert w_{i}\right\Vert
_{X_{0}^{\alpha/2}}^{2}-D_{1}\left\Vert w_{i}\right\Vert _{X_{0}^{\alpha/2}%
}-D_{2},
\end{align*}
where $D_{1}$, $D_{2}$ are some positive constants from the proof of Lemma
\ref{lemat12} and $C_{3}>0$. Hence
\[
\left\Vert w_{i}\right\Vert _{X_{0}^{\alpha/2}}^{2}\leq\tfrac{2}{c\left(
n,\alpha\right)  \left(  p-2\right)  }\left(  C_{4}\left\Vert w_{i}\right\Vert
_{X_{0}^{\alpha/2}}+pC_{1}+C_{2}\left\Vert v_{k}\right\Vert _{Y^{\alpha/2}%
}+D_{2}\right)  \text{ for }i\in\mathbb{N}\text{,}%
\]
where $C_{3}>0$. Therefore, the sequence $\left\{  w_{i}\right\}  $ is bounded
in $X_{0}^{\alpha/2}$ and as such it contains a subsequence, still denoted by
$\left\{  w_{i}\right\}  ,$ such that $w_{i}$ tends to $w_{0}$ weakly in
$X_{0}^{\alpha/2},$ since the space $X_{0}^{\alpha/2}$ is reflexive. From the
fact that the space $X_{0}^{\alpha/2}$ is compactly embedded into the space
$L^{s}\left(  \Omega\right)  $ with $s\in\left[  1,2_{\alpha}^{\ast}\right)
,$ we may assume after passing to a subsequence, still labelled $\left\{
w_{i}\right\}  $, that $w_{i}\rightarrow w_{0}$ in $L^{s}\left(
\Omega\right)  .$ Consequently, denoting $\left\langle \cdot,\cdot
\right\rangle $ a dual pair in $X_{0}^{\alpha/2},$ we have
\[
\left\langle \mathcal{F}_{k}^{\prime}\left(  w_{i}\right)  -\mathcal{F}%
_{k}^{\prime}\left(  w_{0}\right)  ,w_{i}-w_{0}\right\rangle \underset
{i\rightarrow\infty}{\rightarrow}0.
\]
The equality
\begin{align*}
&  \left\langle \mathcal{F}_{k}^{\prime}\left(  w_{i}\right)  -\mathcal{F}%
_{k}^{\prime}\left(  w_{0}\right)  ,w_{i}-w_{0}\right\rangle \\
&  =c\left(  n,\alpha\right)  \left\Vert w_{i}-w_{0}\right\Vert _{X_{0}%
^{\alpha/2}}^{2}+\int\limits_{\Omega}\left(  G_{w}\left(  x,w_{0}+v_{k}%
,u_{k}\right)  -G_{w}\left(  x,w_{i}+v_{k},u_{k}\right)  \right)  \left(
w_{i}-w_{0}\right)  dx
\end{align*}
and the growth condition $\left(  C2\right)  $ lead by H\"{o}lder inequality
to
\begin{align*}
&  \left\vert \int\limits_{\Omega}\left(  G_{w}\left(  x,w_{0}+v_{k}%
,u_{k}\right)  -G_{w}\left(  x,w_{i}+v_{k},u_{k}\right)  \right)  \left(
w_{i}-w_{0}\right)  dx\right\vert \\
&  \leq\left\Vert w_{i}-w_{0}\right\Vert _{L^{s}}\left(  \int\limits_{\Omega
}\left\vert G_{w}\left(  x,w_{i}+v_{k},u_{k}\right)  -G_{w}\left(
x,w_{0}+v_{k},u_{k}\right)  \right\vert ^{\frac{s}{s-1}}dx\right)
^{\frac{s-1}{s}}.
\end{align*}
Let us notice that the right hand side of the above inequality tends as
$i\rightarrow\infty$ to 0, by the growth condition $\left(  C2\right)  $ and
since $w_{i}\rightarrow w_{0}$ in $L^{s}\left(  \Omega\right)  $. As a result,
$w_{i}\rightarrow w_{0}$ in $X_{0}^{\alpha/2}.$ In that way we have
demonstrated that for any $k\in\mathbb{N}_{0}$ the functional $\mathcal{F}%
_{k}$ satisfies the Palais-Smale condition guaranteeing the required
compactness property.
\end{proof}

\section{The main continuity result \label{mainresult}}

In this section we state and prove some sufficient conditions under which
critical points of mountain pass type of the functional $\mathcal{F}_{k}$
defined in $\left(  \ref{akcja2}\right)  $ exist and depend continuously on
parameters. First of all, we state some sufficient conditions guaranteeing a
uniform convergence on any ball in the space $X_{0}^{\alpha/2}$ of a sequence
of functionals together with a sequence of their derivatives. This is in fact
verification of the assumption $\left(  b\right)  $ of Lemma \ref{lemat11}.

\begin{lemma}
If conditions $\left(  C1\right)  $, $\left(  C2\right)  ,$ $\left(
C5\right)  $ are satisfied, the sequence $\left\{  v_{k}\right\}
\subset\mathcal{V}\,$\ tends to $v_{0}$ in $Y^{\alpha/2}$ while the sequence
$\left\{  u_{k}\right\}  \subset\mathcal{U}$ tends to $u_{0}$ in $L^{\infty},$
then the sequences $\left\{  \mathcal{F}_{k}\right\}  ,$ $\left\{
\mathcal{F}_{k}^{\prime}\right\}  $ tend uniformly on any ball from
$X_{0}^{\alpha/2}$ to $\mathcal{F}_{0}$ and $\mathcal{F}_{0}^{\prime},$
respectively.$\label{lemat14}$
\end{lemma}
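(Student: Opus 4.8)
The plan is to split both differences $\mathcal{F}_k(w)-\mathcal{F}_0(w)$ and $\mathcal{F}_k'(w)-\mathcal{F}_0'(w)$ into the bilinear term carrying the boundary datum and the superposition (Nemytskii) terms built from $G$ and $G_z$, and to estimate each piece uniformly on a fixed ball $B_r=\{w\in X_0^{\alpha/2}:\|w\|_{X_0^{\alpha/2}}<r\}$. Reading off $(\ref{akcja2})$, the purely quadratic piece is independent of $k$ and cancels, so

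\begin{align*}
\mathcal{F}_k(w)-\mathcal{F}_0(w) &= c(n,\alpha)\,(v_k-v_0,w)_{X_0^{\alpha/2}} \\
&\quad -\int_\Omega\left(G(x,w+v_k,u_k)-G(x,w+v_0,u_0)\right)dx \\
&\quad +\int_\Omega\left(G(x,v_k,u_k)-G(x,v_0,u_0)\right)dx.
\end{align*}

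For the bilinear term I would apply the Cauchy--Schwarz inequality in $X_0^{\alpha/2}$ to obtain $|(v_k-v_0,w)_{X_0^{\alpha/2}}|\le[v_k-v_0]_{n,\alpha}\,\|w\|_{X_0^{\alpha/2}}\le r\,\|v_k-v_0\|_{Y^{\alpha/2}}$ on $B_r$, which is uniform in $w$ and tends to $0$.

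For the $G$-terms I would split each integrand by the triangle inequality into a variation of the $u$-argument and a variation of the $z$-argument. The $u$-variation $G(x,w+v_0,u_k)-G(x,w+v_0,u_0)$ is controlled directly by the first inequality in $(C5)$; after integrating, Hölder's inequality together with the embedding $(\ref{embeding})$ (and $H^{\alpha/2}(\Omega)\hookrightarrow L^s(\Omega)$ to handle $v_0$) bounds it by a constant depending on $r$ times $\|u_k-u_0\|_{L^\infty}$. The $z$-variation $G(x,w+v_k,u_k)-G(x,w+v_0,u_k)$ I would treat by the mean value theorem (legitimate since $G_z=\partial G/\partial z$ exists and is Carath\'eodory by $(C1)$), writing it as $G_z(x,\xi,u_k)(v_k-v_0)$; the growth bound on $G_z$ from $(C2)$ and Hölder with exponents $s'=s/(s-1)$ and $s$ then bound it by a constant (depending on $r$ through $\|w\|_{X_0^{\alpha/2}}<r$ and on $\sup_k\|v_k\|_{Y^{\alpha/2}}$) times $\|v_k-v_0\|_{L^s(\Omega)}\le C\|v_k-v_0\|_{Y^{\alpha/2}}$. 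The last integral does not involve $w$ and is handled identically. All bounds are uniform over $B_r$ and vanish, giving $\mathcal{F}_k\to\mathcal{F}_0$ uniformly on $B_r$.

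For the derivatives, pairing with $\varphi\in X_0^{\alpha/2}$ yields

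\begin{align*}
\langle\mathcal{F}_k'(w)-\mathcal{F}_0'(w),\varphi\rangle &= c(n,\alpha)\,(v_k-v_0,\varphi)_{X_0^{\alpha/2}} \\
&\quad -\int_\Omega\left(G_z(x,w+v_k,u_k)-G_z(x,w+v_0,u_0)\right)\varphi\,dx,
\end{align*}

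and $\|\mathcal{F}_k'(w)-\mathcal{F}_0'(w)\|$ is the supremum of the left-hand side over $\|\varphi\|_{X_0^{\alpha/2}}\le1$. The bilinear term is again dispatched by Cauchy--Schwarz, uniformly in $w$. For the second term, Hölder with exponents $s$ and $s'$ together with $\|\varphi\|_{L^s(\Omega)}\le C\|\varphi\|_{X_0^{\alpha/2}}$ reduces the task to proving

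\[
\sup_{w\in B_r}\left\| G_z(\cdot,w+v_k,u_k)-G_z(\cdot,w+v_0,u_0)\right\|_{L^{s'}(\Omega)}\longrightarrow 0
\]

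as $k\to\infty$. The $u$-variation again succumbs to the second inequality in $(C5)$ and Hölder, uniformly on $B_r$.

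The genuine difficulty, and the step I expect to be the main obstacle, is the $z$-variation of $G_z$: since no second-order condition on $G$ is assumed, the mean value argument used above for $G$ is unavailable for $G_z$. I would therefore argue by compactness. It suffices to show that every sequence $w_k\in B_r$ gives $\|G_z(\cdot,w_k+v_k,u_k)-G_z(\cdot,w_k+v_0,u_0)\|_{L^{s'}(\Omega)}\to0$, and by the subsequence principle I may pass to a subsequence along which, using the compactness of the embedding of $X_0^{\alpha/2}$ into $L^s(\Omega)$, $w_k\to\bar w$ in $L^s(\Omega)$ and a.e. Then $w_k+v_k\to\bar w+v_0$ and $w_k+v_0\to\bar w+v_0$ in $L^s(\Omega)$, and with $u_k\to u_0$ the continuity of the Nemytskii operator associated with $G_z$ (under $(C1)$ and the growth $(C2)$) forces both $G_z(\cdot,w_k+v_k,u_k)$ and $G_z(\cdot,w_k+v_0,u_0)$ to converge in $L^{s'}(\Omega)$ to the common limit $G_z(\cdot,\bar w+v_0,u_0)$, so their difference tends to $0$. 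This establishes the displayed supremum bound, hence $\mathcal{F}_k'\to\mathcal{F}_0'$ uniformly on $B_r$, completing the proof.
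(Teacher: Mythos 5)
Your proof is correct, and its skeleton coincides with the paper's: the same decomposition of $\mathcal{F}_k-\mathcal{F}_0$ and $\mathcal{F}_k^{\prime}-\mathcal{F}_0^{\prime}$ into the fractional bilinear term (dispatched by Cauchy--Schwarz, giving the bound $c\left(n,\alpha\right)\rho\left\Vert v_k-v_0\right\Vert_{Y^{\alpha/2}}$ on a ball $B_{\rho}$) plus Nemytskii-type increments, with the $u$-increments controlled exactly as you do via $\left(C5\right)$ and H\"older (both you and the paper use implicitly that $u_k\rightarrow u_0$ in $L^{\infty}$ puts $u_k$ in the $\varepsilon$-neighbourhood where $\left(C5\right)$ applies). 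The divergence is at the $z$-increments $G\left(x,w+v_k,u_0\right)-G\left(x,w+v_0,u_0\right)$ and $G_z\left(x,w+v_k,u_0\right)-G_z\left(x,w+v_0,u_0\right)$: the paper invokes Krasnoselskii's theorem on continuity of the Nemytskii operator, citing \cite{IdcRog}, to conclude that these terms tend to zero ``for any $w\in B_{\rho}$'' and then asserts uniform convergence on the ball, whereas, as you correctly identify, for fixed $w$ that theorem yields only pointwise-in-$w$ convergence, and no second-order hypothesis on $G$ makes a mean value argument available for $G_z$. Your two repairs --- the mean value theorem combined with the $\left(C2\right)$ growth of $G_z$ for the $G$-increment, and the subsequence argument through the compact embedding of $X_0^{\alpha/2}$ into $L^{s}\left(\Omega\right)$ for the $G_z$-increment --- are precisely what is needed to upgrade the pointwise statement to the uniform one claimed in the lemma; the relative compactness of $B_{\rho}$ in $L^{s}\left(\Omega\right)$ is the hidden ingredient that the paper's one-line appeal glosses over. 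So your write-up is less a different proof than a completed version of the paper's: it buys genuine uniformity on the ball at the cost of a longer argument, while the paper's version is terser but leaves the passage from Nemytskii continuity to uniform convergence implicit. Two small points to tidy: the sequential reduction of the supremum (``it suffices that every sequence $w_k\in B_r$ gives the $L^{s/\left(s-1\right)}$ norm tending to zero'') deserves its one-line justification by extracting a contradicting subsequence, and in the $u$-variation of $G_z$ you should note explicitly that $1+\left\vert z\right\vert\in L^{s/\left(s-1\right)}\left(\Omega\right)$ because $s/\left(s-1\right)<s$ and $\Omega$ is bounded --- both routine.
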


\begin{proof}
For any $B_{\rho}\subset X_{0}^{\alpha/2}$ and $w\in B_{\rho},$ we have
\begin{align*}
&  \left\vert \mathcal{F}_{k}\left(  w\right)  -\mathcal{F}_{0}\left(
w\right)  \right\vert \\
&  \leq c\left(  n,\alpha\right)  \int\limits_{Q}\frac{\left(  \left(
v_{k}-v_{0}\right)  \left(  x\right)  -\left(  v_{k}-v_{0}\right)  \left(
y\right)  \right)  \left(  w\left(  x\right)  -w\left(  y\right)  \right)
}{\left\vert x-y\right\vert ^{n+\alpha}}dxdy\\
&  +\left\vert \int\limits_{\Omega}\left(  G\left(  x,v_{k},u_{k}\right)
-G\left(  x,v_{0},u_{0}\right)  \right)  dx\right.  \left.  +\int
\limits_{\Omega}\left(  G\left(  x,w+v_{k},u_{k}\right)  -G\left(
x,w+v_{0},u_{0}\right)  \right)  dx\right\vert \\
&  \leq c\left(  n,\alpha\right)  \rho\left\Vert v_{k}-v_{0}\right\Vert
_{Y^{\alpha/2}}+\int\limits_{\Omega}\left\vert G\left(  x,v_{k},u_{0}\right)
-G\left(  x,v_{0},u_{0}\right)  \right\vert dx\\
&  +\int\limits_{\Omega}\left\vert G\left(  x,w+v_{k},u_{0}\right)  -G\left(
x,w+v_{0},u_{0}\right)  \right\vert dx+\left\Vert u_{k}-u_{0}\right\Vert
_{L^{\infty}}\left(  D_{1}+D_{2}\left\Vert v_{k}\right\Vert _{Y^{\alpha/2}%
}^{2}\right)  <\varepsilon
\end{align*}
for any fixed $\varepsilon>0$ and sufficiently large $k$. Indeed, let us
observe that $v_{k}$ tends to $v_{0}$ in $Y^{\alpha/2}$. By conditions
$\left(  C1\right)  ,$ $\left(  C2\right)  $, $\left(  C5\right)  $, the
Krasnosielskii Theorem on continuity of Niemyckii's operator, cf.
\cite{IdcRog}, implies that the right hand side of the above inequality tends
to $0$ for any $w\in B_{\rho}.$ It means that the sequence $\left\{
\mathcal{F}_{k}\right\}  $ tends uniformly to $\mathcal{F}_{0}$ on a ball
$B_{\rho}.$\newline A similar reasoning holds for the case of a uniform
convergence of the sequence $\left\{  \mathcal{F}_{k}^{\prime}\right\}  $ to
$\mathcal{F}_{0}^{\prime}$ on a ball from $X_{0}^{\alpha/2}.$ Let us take any
ball $B_{\rho}\subset X_{0}^{\alpha/2}.$ For any $w\in B_{\rho}$ and $h\in
X_{0}^{\alpha/2}$ such that $h\in B_{1}$ simple calculations lead to
\begin{align*}
&  \left\vert \left\langle \mathcal{F}_{k}^{\prime}\left(  w\right)
-\mathcal{F}_{0}^{\prime}\left(  w\right)  ,h\right\rangle \right\vert \\
&  \leq c\left(  n,\alpha\right)  \left\Vert v_{k}-v_{0}\right\Vert
_{Y^{\alpha/2}}+\int\limits_{\Omega}\left\vert G_{w}\left(  x,w+v_{k}%
,u_{k}\right)  h-G_{w}\left(  x,w+v_{0},u_{0}\right)  h\right\vert dx\\
&  \leq c\left(  n,\alpha\right)  \left\Vert v_{k}-v_{0}\right\Vert
_{Y^{\alpha/2}}+\left(  \int\limits_{\Omega}\left\vert G_{w}\left(
x,w+v_{k},u_{0}\right)  -G_{w}\left(  x,w+v_{0},u_{0}\right)  \right\vert
^{\frac{s}{s-1}}dx\right)  ^{\frac{s-1}{s}}\\
&  +\left\Vert u_{k}-u_{0}\right\Vert _{L^{\infty}}\left(  D_{3}%
+D_{4}\left\Vert v_{k}\right\Vert _{Y^{\alpha/2}}\right)  <\varepsilon
\end{align*}
for sufficiently large $k$, and the claim of the lemma follows.
\end{proof}

Now we employ Lemmas \ref{lemat11}, \ref{lemat12}, \ref{lemat13},
\ref{Lemat_PS}, \ref{lemat14} and Theorem \ref{MPT} to prove the following theorem:

\begin{theorem}
Suppose that the function $G$ satisfies conditions $\left(  C1\right)
-\left(  C5\right)  $ and moreover the sequence $\left\{  v_{k}\right\}
\subset\mathcal{V}$ tends to $v_{0}$ in $Y^{\alpha/2}$ and the sequence
$\left\{  u_{k}\right\}  \subset\mathcal{U}$ tends to $u_{0}$ in $L^{\infty}$.
Then \newline(i) for any $k$ large enough$,$ the set of critical points
$W_{k}$ of the functional $\mathcal{F}_{k}$ is nonempty and does not contain
zero and in fact any point with zero value,\newline(ii) any sequence $\left\{
w_{k}\right\}  $ such that $w_{k}\in W_{k},$ $k\in\mathbb{N},$ is relatively
compact in $X_{0}^{\alpha/2}$ and $\mathrm{Lim}\sup W_{k}\subset
W_{0}.\label{twierdzenie11}$
\end{theorem}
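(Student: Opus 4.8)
The plan is to assemble the theorem as a direct application of the Mountain Pass Theorem (Theorem \ref{MPT}) together with the five auxiliary lemmas, so that part (i) supplies the hypothesis (d) of Lemma \ref{lemat11} and part (ii) then follows from that lemma. I would first fix $k$ large and verify the four hypotheses of Theorem \ref{MPT} for the functional $\mathcal{F}_k$ on $E = X_0^{\alpha/2}$. Lemma \ref{lemat13} already furnishes the geometric setup: there is a ball $B_\eta$ and a point $\omega_1 \notin \overline{B_\eta}$ with $\inf_{w \in \partial B_\eta} \mathcal{F}_k(w) \geq \varepsilon > 0$ and $\mathcal{F}_k(\omega_1) < 0$. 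Recalling that the shift has been arranged so that $\omega_0 = 0$ and noting $\mathcal{F}_k(0) = 0$, this gives hypotheses (a) and (b), since $\inf_{w \in \partial B_\eta}\mathcal{F}_k(w) > 0 = \mathcal{F}_k(\omega_0) > \mathcal{F}_k(\omega_1) = \max\{\mathcal{F}_k(\omega_0), \mathcal{F}_k(\omega_1)\}$. Hypothesis (c) is merely the definition of the minimax value $c_k$ in $(\ref{(1.10)})$, and hypothesis (d), the Palais--Smale condition, is exactly Lemma \ref{Lemat_PS}.

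With the four hypotheses in hand, Theorem \ref{MPT} yields that $c_k$ is a critical value of $\mathcal{F}_k$ and, crucially, that $c_k > \max\{\mathcal{F}_k(\omega_0), \mathcal{F}_k(\omega_1)\} \geq \mathcal{F}_k(0) = 0$. Thus there exists $w \in X_0^{\alpha/2}$ with $\mathcal{F}_k(w) = c_k$ and $\mathcal{F}_k'(w) = 0$, i.e. $W_k \neq \emptyset$. Since every element of $W_k$ carries the value $c_k > 0$, the set $W_k$ cannot contain $0$ (whose value is $0$) nor indeed any point at which $\mathcal{F}_k$ vanishes. This establishes part (i) for all sufficiently large $k$.

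For part (ii) the plan is to invoke Lemma \ref{lemat11} with the sequence $\{\mathcal{F}_k\}$ playing the role of $\{\mathcal{I}_k\}$, after fixing a common radius $r = \rho$ large enough that the critical sets live inside $B_\rho$. Here I would use Lemma \ref{lemat12} to obtain a single $\rho > 0$, independent of $k$, with $W_k \subset B_\rho$ for all admissible $v_k \in \mathcal{V}$, $u_k \in \mathcal{U}$; this identifies $W_k$ with $W_k(\rho)$ in the notation of $(\ref{(1.13)})$ and lets me restrict the minimax over paths contained in $B_\rho$. The four assumptions of Lemma \ref{lemat11} are then met: assumption (a) by Lemma \ref{Lemat_PS} (each $\mathcal{F}_k$ is $C^1$), assumption (b) that $\mathcal{F}_0$ satisfies Palais--Smale again by Lemma \ref{Lemat_PS}, assumption (c) that $\{\mathcal{F}_k\}$ and $\{\mathcal{F}_k'\}$ converge uniformly on $B_\rho$ to $\mathcal{F}_0$, $\mathcal{F}_0'$ by Lemma \ref{lemat14}, and assumption (d) nonemptiness of $W_k(\rho)$ by part (i) just proved. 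Lemma \ref{lemat11} then delivers exactly the two conclusions of (ii): relative compactness in $X_0^{\alpha/2}$ of any sequence $\{w_k\}$ with $w_k \in W_k$, and the Painlev\'e--Kuratowski inclusion $\mathrm{Lim}\sup W_k \subset W_0$.

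The main technical care, and what I expect to be the only genuine obstacle, is the bookkeeping that reconciles the two notations $W_k$ and $W_k(r)$ — ensuring that the minimax value $c_k$ computed over all paths in $X_0^{\alpha/2}$ in $(\ref{(1.10)})$ coincides with the value $c_k(\rho)$ computed over paths confined to $B_\rho$ in $(\ref{(1.12)})$. This requires the observation that once $\rho$ from Lemma \ref{lemat12} dominates $\|\omega_1\|_{X_0^{\alpha/2}}$ and the critical sets sit inside $B_\rho$, no minimizing path need leave $B_\rho$, so the two infima agree and the abstract Lemma \ref{lemat11} applies verbatim. Everything else in the argument is a direct citation of the preceding lemmas; no new estimate is needed.
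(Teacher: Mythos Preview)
Your proposal is correct and follows essentially the same route as the paper: verify the Mountain Pass geometry via Lemma \ref{lemat13} with $\omega_0=0$, $\mathcal{F}_k(0)=0$, invoke Lemma \ref{Lemat_PS} for Palais--Smale, apply Theorem \ref{MPT} to get $W_k\neq\emptyset$ with $c_k>0$, then use Lemma \ref{lemat12} to confine all $W_k$ to a common ball, enlarge the radius so that $\omega_1$ lies inside (the paper takes $r\geq\rho$ with $\omega_1\in B_r$, exactly your ``bookkeeping'' point), and feed Lemmas \ref{Lemat_PS} and \ref{lemat14} into Lemma \ref{lemat11}. The only slip is attributing the $C^1$ regularity of $\mathcal{F}_k$ to Lemma \ref{Lemat_PS}; the paper simply states it as an evident fact.
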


\begin{proof}
First, we shall prove the first part of the assertion of the theorem, that is
for any $k$ sufficiently large$,$ the set of critical points $W_{k}$ of the
functional $\mathcal{F}_{k}$ is nonempty and does not contain zero and any
point with zero value. Obviously, the functional $\mathcal{F}_{k},$
$k\in\mathbb{N}_{0}$ is of $C^{1}-$class on $X_{0}^{\alpha/2}$. Moreover, from
Lemma \ref{lemat13} it follows that there exist the ball $B_{\eta}$ and the
point $\omega_{1}\in X_{0}^{\alpha/2},$ independent of the choice of
$v_{k},u_{k}$, such that $\omega_{1}\notin\overline{B_{\eta}}$ and
$\inf\nolimits_{w\in\partial B_{\eta}}\mathcal{F}_{k}\left(  w\right)
>0=\max\left\{  \mathcal{F}_{k}\left(  0\right)  ,\mathcal{F}_{k}\left(
\omega_{1}\right)  \right\}  ,$ $k\in\mathbb{N}_{0}$ and furthermore
conditions $\left(  C2\right)  $, $\left(  C3\right)  $ guarantee that the
functional $\mathcal{F}_{k}$ satisfies the Palais-Smale condition for
$k\in\mathbb{N}_{0}$ as stated in Lemma \ref{Lemat_PS}. At this point we apply
the Mountain Pass Theorem \ref{MPT}, with $\omega_{0}=0$ and $c=c_{k},$ to
deduce that for any $v_{k}$ and $u_{k},$ the set of critical points for which
a critical value of the functional $\mathcal{F}_{k}$ denoted by $c_{k}$ is
attained, is not empty, i.e.%
\[
W_{k}=\left\{  w\in X_{0}^{\alpha/2}:\mathcal{F}_{k}\left(  w\right)
=c_{k}\text{ and }\mathcal{F}_{k}^{\prime}\left(  w\right)  =0\right\}
\neq\emptyset.
\]
Moreover, $c_{k}=\inf\nolimits_{g\in M}\max\nolimits_{t\in\left[  0,1\right]
}\mathcal{F}_{k}\left(  g\left(  t\right)  \right)  >\max\left\{
\mathcal{F}_{k}\left(  0\right)  ,\mathcal{F}_{k}\left(  \omega_{1}\right)
\right\}  =0,$ and therefore $w=0$ does not belong to the set $W_{k}$ for all
$k\in\mathbb{N}_{0}$ and actually any point with zero value$.$\newline What is
left is to demonstrate that $\mathrm{Lim}\sup W_{k}\neq\emptyset$ in
$X_{0}^{\alpha/2}$ and $\mathrm{Lim}\sup W_{k}\subset W_{0}.$ This part of the
assertion of our theorem follows directly from Lemma \ref{lemat11}. Indeed, by
invoking Lemma \ref{lemat12}, we obtain that there exists a ball $B_{\rho
}\subset X_{0}^{\alpha/2}$ with $\rho>0$ such that $W_{k}\subset B_{\rho}$ for
all $k\in\mathbb{N}_{0},$ and subsequently there exists a ball $B_{r}\subset
X_{0}^{\alpha/2}$ with $r\geq\rho$ such that $w_{1}\in B_{r},$ i.e.
$W_{k}\left(  r\right)  =W_{k},$ where $W_{k}\left(  r\right)  $ is given by
$\left(  \ref{(1.13)}\right)  $ with $\mathcal{I}_{k}=\mathcal{F}_{k},$
$k\in\mathbb{N}_{0}.$ $\mathcal{F}_{0}$ satisfies the Palais-Smale condition
and each functional $\mathcal{F}_{k}$ is of $C^{1}-$class, and corresponding
set $W_{k}\left(  r\right)  =W_{k}$ is nonempty for $k\in\mathbb{N}_{0}$.
Finally, Lemma \ref{lemat14} implies that the sequences $\left\{
\mathcal{F}_{k}\right\}  ,$ $\left\{  \mathcal{F}_{k}^{\prime}\right\}  $ tend
uniformly on any ball from $X_{0}^{\alpha/2}$ to $\mathcal{F}_{0}$ and
$\mathcal{F}_{0}^{\prime},$ respectively$.$ It means that the proof of the
theorem is complete.
\end{proof}

Let us notice that the critical value of the functional $F_{u_{k},v_{k}}$
defined in $\left(  \ref{akcja1}\right)  $ denoted by $\mathbf{c}_{k}$
satisfies the following relation
\[
\mathbf{c}_{k}=c_{k}-\int\limits_{\Omega}G\left(  x,v_{k}\left(  x\right)
,u_{k}\left(  x\right)  \right)  dx
\]
where $c_{k}$ is defined in $\left(  \ref{(1.10)}\right)  $ as the critical
value of $\mathcal{F}_{k}.$ The set of critical points of the functional
$F_{u_{k},v_{k}}$ for which the critical value $\mathbf{c}_{k}$ is attained
has the form%
\[
W_{u_{k},v_{k}}=\left\{  w\in X_{0}^{\alpha/2}:F_{u_{k},v_{k}}\left(
w\right)  =\mathbf{c}_{k}\text{ and }F_{u_{k},v_{k}}^{\prime}\left(  w\right)
=0\right\}
\]
for $k\in\mathbb{N}_{0}.$

Immediately from Theorem \ref{twierdzenie11} we get the following corollaries
characterizing, first of all, the set of critical points of mountain pass type
of the functional of action without shift and then the set of corresponding
weak solutions of the problem involving the equation with the fractional
Laplacian with exterior homogenous Dirichlet boundary data.

\begin{corollary}
If all assumptions of Theorem \ref{twierdzenie11} are satisfied, then for any
$k$ sufficiently large the set $W_{u_{k},v_{k}}$ is nonempty and does not
contain zero and any point with zero value, $\mathrm{Lim}\sup W_{u_{k},v_{k}%
}\neq\emptyset$ in $X_{0}^{\alpha/2}$ and $\mathrm{Lim}\sup W_{u_{k},v_{k}%
}\subset W_{u_{0},v_{0}}.$\label{Cor4.1}
\end{corollary}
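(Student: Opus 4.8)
The plan is to reduce the entire statement to Theorem \ref{twierdzenie11} by noting that $\mathcal{F}_k$ and $F_{u_k,v_k}$ differ only by a $w$-independent constant. Comparing $(\ref{akcja1})$ with $(\ref{akcja2})$, for each $k\in\mathbb{N}_0$ one has
\[
\mathcal{F}_k(w)=F_{u_k,v_k}(w)+\int_\Omega G\left(x,v_k(x),u_k(x)\right)dx ,
\]
and the added term does not depend on $w\in X_0^{\alpha/2}$. First I would record the two elementary consequences of this identity. Differentiating annihilates the constant, so $\mathcal{F}_k'(w)=F_{u_k,v_k}'(w)$ for every $w$, whence the two functionals share exactly the same critical points. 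Moreover, using the relation $\mathbf{c}_k=c_k-\int_\Omega G(x,v_k,u_k)\,dx$ recalled immediately before the corollary, a critical point $w$ satisfies $\mathcal{F}_k(w)=c_k$ if and only if $F_{u_k,v_k}(w)=\mathbf{c}_k$. Combining the two observations yields the key set identity
\[
W_{u_k,v_k}=W_k\qquad\text{for every }k\in\mathbb{N}_0 ,
\]
with $W_k$ the set defined in $(\ref{(1.11)})$; in particular $W_{u_0,v_0}=W_0$.

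Once this equality is in hand, each assertion of the corollary is literally the corresponding assertion of Theorem \ref{twierdzenie11} read off on the same sequence of sets. Nonemptiness of $W_{u_k,v_k}$ for large $k$ and the exclusion of the origin follow from part (i): since $\mathcal{F}_k(0)=0$ (the quadratic and cross terms in $(\ref{akcja2})$ vanish at $w=0$ and the two $G$-integrals cancel) while $c_k>0$, one has $0\notin W_k=W_{u_k,v_k}$; the corresponding "zero value" statement transfers because every point of $W_{u_k,v_k}$ carries the value $\mathbf{c}_k$, which strictly exceeds $F_{u_k,v_k}(0)=-\int_\Omega G(x,v_k,u_k)\,dx$ exactly as $c_k>\mathcal{F}_k(0)=0$. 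The relative compactness of any selection $w_k\in W_{u_k,v_k}$, together with $\mathrm{Lim}\sup W_{u_k,v_k}\neq\emptyset$ and $\mathrm{Lim}\sup W_{u_k,v_k}\subset W_{u_0,v_0}$, is then immediate from part (ii) combined with $W_k=W_{u_k,v_k}$ and $W_0=W_{u_0,v_0}$, since the Painlev\'e--Kuratowski upper limit depends only on the underlying sequence of sets.

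There is essentially no genuine obstacle here; the corollary is a direct transcription. The only point that needs care, and which I would make explicit before invoking Theorem \ref{twierdzenie11}, is the bookkeeping showing that the $w$-independent term $\int_\Omega G(x,v_k,u_k)\,dx$ simultaneously leaves the Fr\'echet derivative unchanged and shifts the critical value by precisely the amount separating $c_k$ from $\mathbf{c}_k$, so that the two descriptions of the critical set coincide set-theoretically and not merely up to an energy normalization.
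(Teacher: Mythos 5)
Your proposal is correct and takes essentially the same route as the paper, which states the corollary as following ``immediately'' from Theorem \ref{twierdzenie11} via exactly your observation: $\mathcal{F}_k$ and $F_{u_k,v_k}$ differ by the $w$-independent constant $\int_\Omega G\left(x,v_k(x),u_k(x)\right)dx$, so the derivatives coincide, $\mathbf{c}_k=c_k-\int_\Omega G\left(x,v_k,u_k\right)dx$, and hence $W_{u_k,v_k}=W_k$ for all $k\in\mathbb{N}_0$. Your explicit bookkeeping (including reading the ``zero value'' exclusion through the shifted value $\mathbf{c}_k>F_{u_k,v_k}(0)$, mirroring $c_k>\mathcal{F}_k(0)=0$) is precisely the reduction the paper leaves implicit.
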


In other words, the set valued mapping%
\[
L^{\infty}\times Y^{\alpha/2}\ni\left(  u,v\right)  \mapsto W_{u,v}\subset
X_{0}^{\alpha/2}%
\]
is upper semicontinous with respect to the norm topologies of spaces
$L^{\infty},$ $Y^{\alpha/2}$ and $X_{0}^{\alpha/2}.$

Let us denote by $\mathcal{S}_{u_{k},v_{k}}^{w}$ the set of the weak solutions
to problem $\left(  \ref{2.1}\right)  $ defined in $\left(  \ref{weak}\right)
$ corresponding to the critical value $\mathbf{c}_{k}.$

\begin{corollary}
If all assumptions of Theorem \ref{twierdzenie11} are satisfied, then for any
$k$ sufficiently large the set $\mathcal{S}_{u_{k},v_{k}}^{w}$ is nonempty,
$\mathrm{Lim}\sup\mathcal{S}_{u_{k},v_{k}}^{w}\neq\emptyset$ in $X_{0}%
^{\alpha/2}$ and $\mathrm{Lim}\sup\mathcal{S}_{u_{k},v_{k}}^{w}\subset
\mathcal{S}_{u_{0},v_{0}}^{w}.$
\end{corollary}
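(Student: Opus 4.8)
The plan is to transfer the conclusions of Theorem~\ref{twierdzenie11} and Corollary~\ref{Cor4.1} from the set of critical points $W_{u_k,v_k}$ of the unshifted action functional $F_{u_k,v_k}$ to the set of weak solutions $\mathcal{S}_{u_k,v_k}^{w}$. The key observation is that these two sets coincide. First I would recall from the discussion preceding the corollary that the critical value $\mathbf{c}_k$ of $F_{u_k,v_k}$ and the critical value $c_k$ of $\mathcal{F}_k$ differ only by the additive constant $\int_{\Omega}G(x,v_k(x),u_k(x))\,dx$, and that $F_{u_k,v_k}$ and $\mathcal{F}_k$ differ by precisely this same constant. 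Consequently their Fr\'echet derivatives are identical, $F_{u_k,v_k}^{\prime}=\mathcal{F}_k^{\prime}$, so a point $w\in X_0^{\alpha/2}$ satisfies $F_{u_k,v_k}^{\prime}(w)=0$ if and only if $\mathcal{F}_k^{\prime}(w)=0$; moreover $F_{u_k,v_k}(w)=\mathbf{c}_k$ if and only if $\mathcal{F}_k(w)=c_k$. This gives the set identity $W_{u_k,v_k}=W_k$ for every $k\in\mathbb{N}_0$.

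Next I would verify that the Euler--Lagrange equation $F_{u_k,v_k}^{\prime}(w)=0$ is exactly the weak formulation \eqref{weak}. Computing the directional derivative of $F_{u_k,v_k}$ as given in \eqref{akcja1} against a test function $\varphi\in X_0^{\alpha/2}$ reproduces, after using the symmetry of the kernel, the bilinear form in $w$ and $v_k$ over $Q$ on the left-hand side and the term $\int_\Omega G_w(x,(w+v_k)(x),u_k(x))\varphi(x)\,dx$ on the right-hand side. Thus $F_{u_k,v_k}^{\prime}(w)=0$ is equivalent to the identity \eqref{weak} holding for all $\varphi\in X_0^{\alpha/2}$, which is precisely the definition of $w$ being a weak solution. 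Restricting to those critical points at which the critical value $\mathbf{c}_k$ is attained, this yields $\mathcal{S}_{u_k,v_k}^{w}=W_{u_k,v_k}=W_k$.

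With this triple identification in hand, the corollary becomes a direct restatement of Theorem~\ref{twierdzenie11} (equivalently Corollary~\ref{Cor4.1}). Nonemptiness of $\mathcal{S}_{u_k,v_k}^{w}$ for $k$ large follows from nonemptiness of $W_k$ established in part~(i); nonemptiness of $\mathrm{Lim}\sup\mathcal{S}_{u_k,v_k}^{w}$ in $X_0^{\alpha/2}$ follows from the relative compactness in part~(ii), which guarantees that any sequence $\{w_k\}$ with $w_k\in\mathcal{S}_{u_k,v_k}^{w}=W_k$ has a cluster point; and the inclusion $\mathrm{Lim}\sup\mathcal{S}_{u_k,v_k}^{w}\subset\mathcal{S}_{u_0,v_0}^{w}$ is just $\mathrm{Lim}\sup W_k\subset W_0$ read through the identity $W_0=\mathcal{S}_{u_0,v_0}^{w}$. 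I do not anticipate a genuine obstacle here, since the entire content has already been proved; the only point demanding care is the bookkeeping that the additive constant $\int_\Omega G(x,v_k,u_k)\,dx$ is independent of $w$ and hence shifts neither the set of critical points nor the derivative, only the numerical value of the functional. Making that independence explicit is the one step that must be stated carefully to legitimately invoke Theorem~\ref{twierdzenie11}.
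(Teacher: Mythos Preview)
Your proposal is correct and follows exactly the route the paper intends: the paper treats this corollary as an immediate consequence of Theorem~\ref{twierdzenie11} (and Corollary~\ref{Cor4.1}) without writing out a proof, and your argument simply makes explicit the identification $\mathcal{S}_{u_k,v_k}^{w}=W_{u_k,v_k}=W_k$ underlying that claim. The bookkeeping you highlight---that the additive constant $\int_\Omega G(x,v_k,u_k)\,dx$ is independent of $w$ and hence leaves both the derivative and the critical-point set unchanged---is precisely the content the paper suppresses when it writes ``immediately.''
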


Furthermore, it is easy to observe that $\mathcal{S}_{u_{k},v_{k}}%
^{z}=\mathcal{S}_{u_{k},v_{k}}^{w}+v_{k},$\ $k\in N_{0}$ is a set of weak
solutions to problem\textbf{ }$\left(  \ref{1.1}\right)  $ with nonhomogenous
exterior boundary data corresponding to the critical value $\mathbf{c}_{k}$.

\begin{corollary}
If all assumptions of Theorem \ref{twierdzenie11} are satisfied, then for any
$k$ the set $\mathcal{S}_{u_{k},v_{k}}^{z}$ is nonempty and does not contain
$v_{k}$, $\mathrm{Lim}\sup\mathcal{S}_{u_{k},v_{k}}^{z}\neq\emptyset$ in
$Y^{\alpha/2}$ and $\mathrm{Lim}\sup\mathcal{S}_{u_{k},v_{k}}^{z}%
\subset\mathcal{S}_{u_{0},v_{0}}^{z}.\label{Cor4.2}$
\end{corollary}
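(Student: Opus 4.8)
The plan is to derive Corollary \ref{Cor4.2} directly from the preceding corollary about the shifted solution sets $\mathcal{S}_{u_{k},v_{k}}^{w}$, using the elementary relation $\mathcal{S}_{u_{k},v_{k}}^{z}=\mathcal{S}_{u_{k},v_{k}}^{w}+v_{k}$ already recorded in the text. The entire argument is essentially a translation of the upper-semicontinuity statement through the affine shift by $v_{k}$, combined with the convergence $v_{k}\rightarrow v_{0}$ in $Y^{\alpha/2}$.

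First I would establish the nonemptiness and the exclusion of $v_{k}$. Since the previous corollary gives that $\mathcal{S}_{u_{k},v_{k}}^{w}$ is nonempty for $k$ large, and since adding the fixed element $v_{k}$ preserves nonemptiness, $\mathcal{S}_{u_{k},v_{k}}^{z}=\mathcal{S}_{u_{k},v_{k}}^{w}+v_{k}$ is nonempty. For the exclusion statement: a point $z\in\mathcal{S}_{u_{k},v_{k}}^{z}$ equals $v_{k}$ if and only if the corresponding $w=z-v_{k}\in\mathcal{S}_{u_{k},v_{k}}^{w}$ equals $0$. But by Theorem \ref{twierdzenie11}(i) and the previous corollary, the set of critical points $W_{u_{k},v_{k}}$ (equivalently $\mathcal{S}_{u_{k},v_{k}}^{w}$) does not contain zero, since the mountain-pass critical value $\mathbf{c}_{k}$ is strictly positive after the shift normalization; hence $v_{k}\notin\mathcal{S}_{u_{k},v_{k}}^{z}$.

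Next I would transfer the Painlev\'{e}--Kuratowski upper-limit inclusion. The key observation is that the map $w\mapsto w+v_{0}$ is an affine isometry of $Y^{\alpha/2}$ and that $v_{k}\rightarrow v_{0}$ strongly. Take any cluster point $\tilde{z}$ of a sequence $z_{k}\in\mathcal{S}_{u_{k},v_{k}}^{z}$; write $z_{k}=w_{k}+v_{k}$ with $w_{k}\in\mathcal{S}_{u_{k},v_{k}}^{w}$. Passing to the subsequence along which $z_{k}\rightarrow\tilde{z}$ in $Y^{\alpha/2}$, and using $v_{k}\rightarrow v_{0}$, we get $w_{k}=z_{k}-v_{k}\rightarrow\tilde{z}-v_{0}=:\tilde{w}$ in $X_{0}^{\alpha/2}$ (the limit lies in $X_{0}^{\alpha/2}$ because each $w_{k}$ does and the space is closed). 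Thus $\tilde{w}\in\mathrm{Lim}\sup\mathcal{S}_{u_{k},v_{k}}^{w}$, which by the previous corollary is contained in $\mathcal{S}_{u_{0},v_{0}}^{w}$, whence $\tilde{z}=\tilde{w}+v_{0}\in\mathcal{S}_{u_{0},v_{0}}^{w}+v_{0}=\mathcal{S}_{u_{0},v_{0}}^{z}$. Nonemptiness of $\mathrm{Lim}\sup\mathcal{S}_{u_{k},v_{k}}^{z}$ follows from nonemptiness of $\mathrm{Lim}\sup\mathcal{S}_{u_{k},v_{k}}^{w}$ by the same shift.

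The single point deserving care, which I would treat as the main (though minor) obstacle, is the topology mismatch: the shifted solutions $w_{k}$ live in $X_{0}^{\alpha/2}$ while the unshifted $z_{k}$ are naturally viewed in $Y^{\alpha/2}$, so I must be explicit that strong convergence of $z_{k}$ in $Y^{\alpha/2}$ together with $v_{k}\rightarrow v_{0}$ in $Y^{\alpha/2}$ yields strong convergence of $w_{k}$ in the $X_{0}^{\alpha/2}$-norm, using the orthogonal decomposition \eqref{suma} and the fact that on $X_{0}^{\alpha/2}$ the $Y^{\alpha/2}$-norm controls (and is controlled by, via the embedding \eqref{embeding}) the $X_{0}^{\alpha/2}$-norm. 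Once that equivalence of convergence is noted, the whole statement reduces to the previously proven corollary via the affine translation, and the proof concludes.
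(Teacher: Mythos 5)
Your proposal is correct and follows essentially the same route as the paper, which states Corollary \ref{Cor4.2} as an immediate consequence of Theorem \ref{twierdzenie11} (through Corollary \ref{Cor4.1}) via the identity $\mathcal{S}_{u_{k},v_{k}}^{z}=\mathcal{S}_{u_{k},v_{k}}^{w}+v_{k}$ together with $v_{k}\rightarrow v_{0}$ in $Y^{\alpha/2}$; your explicit treatment of the topology mismatch (equivalence of the $Y^{\alpha/2}$- and $X_{0}^{\alpha/2}$-norms on $X_{0}^{\alpha/2}$ via \eqref{embeding} and closedness of $X_{0}^{\alpha/2}$) simply makes precise what the paper leaves tacit. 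One minor slip worth fixing: what is strictly positive is the critical value $c_{k}$ of the shifted functional $\mathcal{F}_{k}$, not necessarily $\mathbf{c}_{k}=c_{k}-\int_{\Omega}G\left(  x,v_{k}\left(  x\right)  ,u_{k}\left(  x\right)  \right)  dx$, but since $\mathcal{F}_{k}$ and $F_{u_{k},v_{k}}$ differ by that constant their critical sets coincide, so $\mathcal{F}_{k}\left(  0\right)  =0<c_{k}$ still gives $0\notin\mathcal{S}_{u_{k},v_{k}}^{w}$ and hence $v_{k}\notin\mathcal{S}_{u_{k},v_{k}}^{z}$.
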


\begin{example}
\label{przyklad}It is easy to check that the assumptions of Theorem
\ref{twierdzenie11} are satisfied by the equation
\begin{equation}
\left\{
\begin{array}
[c]{l}%
\left(  -\triangle\right)  ^{3/4}z\left(  x\right)  =\frac{7}{2}z^{5/2}\left(
x\right)  -\gamma u\left(  x\right)  z\left(  x\right)  -\frac{5}{2}u\left(
x\right)  z^{3/2}\left(  x\right)  \mathrm{sin}^{2}\left\vert x\right\vert
\text{ \textrm{in} }\Omega\subset\mathbb{R}^{3}\\
z\left(  x\right)  =v\left(  x\right)  \text{ \textrm{in} }\mathbb{R}%
^{3}\backslash\Omega,
\end{array}
\right.  \label{4.1}%
\end{equation}
where $\Omega=(0,1)^{3},$ $u\in\mathcal{U}$ such that
\[
\mathcal{U}=\left\{  u\in L^{\infty}\left(  \Omega,\mathbb{R}\right)
:\right.  \left.  u\left(  x\right)  \in U\subset\left(  \frac{\hat{c}%
-b}{\gamma},\frac{\hat{c}+b}{\gamma}\right)  \text{ a.e.}\right\}
\]
where $U$ is a bounded interval $\gamma>0,$ $\hat{c}=c\left(  3,\frac{3}%
{2}\right)  =\frac{3\Gamma\left(  \frac{9}{4}\right)  }{\left(  2\pi\right)
^{3/2}\Gamma\left(  \frac{1}{4}\right)  }=\frac{15}{2^{11/2}\pi^{3/2}}$ and
$b$ is like in $\left(  C4\right)  $ while $v\in\mathcal{V}$ such that
\[
\mathcal{V=}\left\{  v\in Y^{3/4}:v\left(  x\right)  \in\left[  0,1\right]
,\text{ }\left\Vert v\right\Vert \leq1\right\}  .
\]
Let us notice that the functional of action for the equation with homogenous
exterior boundary condition related to $F_{u,v}$ defined in $\left(
\ref{akcja1}\right)  ,$ and to $\mathcal{F}_{k}$ defined in $\left(
\ref{akcja2}\right)  $ has the following form%
\begin{equation}
F\left(  z\right)  =\frac{\hat{c}}{2}\int\limits_{Q}\frac{\left\vert z\left(
x\right)  -z\left(  y\right)  \right\vert ^{2}}{\left\vert x-y\right\vert
^{\frac{9}{2}}}dxdy-\int\limits_{\Omega}\left(  z^{\frac{7}{2}}\left(
x\right)  -\tfrac{\gamma}{2}u\left(  x\right)  z^{2}\left(  x\right)
-u\left(  x\right)  z^{\frac{5}{2}}\left(  x\right)  \mathrm{sin}%
^{2}\left\vert x\right\vert \right)  dx \label{4.2}%
\end{equation}
where $z\in X_{0}^{3/4}.$ It is easy to check that all assumption $\left(
C1\right)  -\left(  C5\right)  $ are satisfied by the function
\[
G\left(  x,z\left(  x\right)  ,u\left(  x\right)  \right)  =z^{7/2}\left(
x\right)  -\tfrac{\gamma}{2}u\left(  x\right)  z^{2}\left(  x\right)
-u\left(  x\right)  z^{5/2}\left(  x\right)  \mathrm{sin}^{2}\left\vert
x\right\vert .
\]
Applying Corollary \ref{Cor4.2} we have that for any $u\in\mathcal{U}$ and
$v\in\mathcal{V}$ there exists a weak solution $z_{u,v}\in Y^{3/4}$ to problem
$\left(  \ref{4.1}\right)  $ and the set of weak solutions continuously, or
upper semicontinuously, depends on boundary data $v\rightarrow0$ and
distributed parameter $u.$
\end{example}

\begin{remark}
\label{uwaga}Let us observe that putting, $\widetilde{z}_{k}\left(  x\right)
=k\rho_{1}\left(  x\right)  $ where $\rho_{1}$ is the positive first
eigenfunction of the fractional Laplace operator defined in $\left(
\ref{Laplacian_i}\right)  $ with $\alpha=\frac{3}{2}$ we get for some
$\delta>0$
\begin{align*}
F\left(  \widetilde{z}_{k}\right)   &  \leq\frac{\hat{c}}{2}k^{2}\left\Vert
\rho_{1}\right\Vert _{X_{0}^{\alpha/2}}^{2}-k^{7/2}\left\Vert \rho
_{1}\right\Vert _{L^{7/2}}^{7/2}-\frac{\gamma}{2}\left\Vert u\right\Vert
_{L^{2}}k^{2}\left\Vert \rho_{1}\right\Vert _{L^{4}}^{2}+k^{5/2}\left\Vert
u\right\Vert _{L^{\infty}}\left\Vert \rho_{1}\right\Vert _{L^{5/2}}^{5/2}\\
&  \leq-k^{7/2}\left(  \left\Vert \rho_{1}\right\Vert _{L^{7/2}}%
^{7/2}+k^{-\delta}A\right)  \rightarrow-\infty
\end{align*}
as $k\rightarrow\infty$ for $F$ is defined in $\left(  \ref{4.2}\right)  .$
Moreover, for a sequence $\overline{z}_{k}\left(  x\right)  =\rho_{k}\left(
x\right)  $ such that
\[
\left(  -\Delta\right)  ^{3/4}\rho_{k}=\lambda_{k}\rho_{k}%
\]
$\left\Vert \rho_{k}\right\Vert _{L^{2}}=1$ and $\left\Vert \rho
_{k}\right\Vert _{L^{\infty}}\leq\tilde{C}$ for some positive constant
$\tilde{C}$ we obtain%
\[
F\left(  \overline{z}_{k}\right)  \geq\frac{\hat{c}}{2}\lambda_{k}\left\Vert
\rho_{k}\right\Vert _{L^{2}}^{2}-C=\frac{\hat{c}}{2}\lambda_{k}-C\rightarrow
\infty
\]
as $k\rightarrow\infty$ and $\lambda_{k}\sim k^{1/2\text{ }}$(cf. for example
\cite{CheSon}) while $C$ depends on $\left\Vert u\right\Vert _{L^{\infty}}$
and $\left\Vert \rho_{k}\right\Vert _{L^{\infty}}$ in a bounded way.
Consequently, the functional $F$ given in $\left(  \ref{4.2}\right)  $ is
unbounded both from above and below. For that reason to obtain the existence
results we cannot use methods applied, for example, in \cite{Bor3}.
\end{remark}

\section{Optimal control problem}

Following ideas employed for optimal control system in \cite{MacStr} by a
direct application of Corollary \ref{Cor4.2} following from Theorem
\ref{twierdzenie11} the existence of optimal processes can be ascertained for
the optimal control problem involving the weak formulation of the differential
equation with\ the fractional Laplacian and some nonlinearity of the form
\begin{equation}
(-\Delta)^{\alpha/2}z\left(  x\right)  =G_{z}\left(  x,z\left(  x\right)
,u\left(  x\right)  \right)  \text{ in }\Omega\label{5.1}%
\end{equation}
with the fixed exterior boundary condition
\begin{equation}
z\left(  x\right)  =v\left(  x\right)  \text{ on }\mathbb{R}^{n}%
\backslash\Omega\label{5.1b}%
\end{equation}
and with the integral cost functional $J$ defined via $\Phi$ as
\begin{equation}
J\left(  z,u\right)  =\int\limits_{\Omega}\Phi\left(  x,z\left(  x\right)
,u\left(  x\right)  \right)  dx \label{5.2}%
\end{equation}
where $\Phi:\Omega\times\mathbb{R\times R}^{m}\rightarrow\mathbb{R}$ is a
given function and a control $u\in\mathcal{U}_{\lambda}$ where%
\[
\mathcal{U}_{\lambda}=\left\{  u:\Omega\rightarrow\mathbb{R}^{m}:u\left(
x\right)  \in U\text{ and }\left\vert u\left(  x_{1}\right)  -u\left(
x_{2}\right)  \right\vert <\lambda\text{ }\left\vert x_{1}-x_{2}\right\vert
\right\}
\]
for a fixed $\lambda>0$ and $U$ a compact subset of $\mathbb{R}^{m}.$ Recall
that $\alpha\in\left(  0,2\right)  ,$ $n>\alpha$ and $m\geq1.$ Let
$\mathcal{A}$ be the set of all admissible pairs; that is%
\begin{align*}
\mathcal{A}  &  \mathcal{=}\left\{  \left(  z,u\right)  \in Y^{\alpha/2}%
\times\mathcal{U}_{\lambda}:\text{ }z\text{ is a weak solution of }\left(
\ref{5.1}\right)  \text{ satisfying }\left(  \ref{5.1b}\right)  \text{
corresponding }\right. \\
&  \left.  \text{to the critical value }\mathbf{c}_{k}\text{ of the functional
}F_{u,v}\text{ defined in }\left(  \ref{akcja1}\right)  \text{ for }%
u\in\mathcal{U}_{\lambda}\right\}  .
\end{align*}
It should be noted that Corollary \ref{Cor4.2} implies that the set of all
admissible pairs $\mathcal{A}$ is nonempty. In this section, our aim is to
find a pair $\left(  z_{u^{\ast}},u^{\ast}\right)  \in Y^{\alpha/2}%
\times\mathcal{U}_{\lambda}$ satisfying
\begin{equation}
J\left(  z_{u^{\ast}},u^{\ast}\right)  =\min_{\left(  z,u\right)
\in\mathcal{A}}J\left(  z,u\right)  . \label{optymalne}%
\end{equation}

The function $\Phi$ is required to satisfy:

\begin{enumerate}
\item[(C6)] the function $\Phi$ is measurable with respect to $x$ for any
$(z,u)\in\mathbb{R}\times U$ and continuous with respect to $(z,u)\in
\mathbb{R}\times U$ for a.e. $x\in\Omega$;

\item[(C7)] there exists $c>0$ such that
\[
\left\vert \Phi\left(  x,z,u\right)  \right\vert \leq c\left(  1+\left\vert
z\right\vert ^{s}\right)  ,
\]
for $z\in\mathbb{R},$ $u\in U$ and a.e. $x\in\Omega$. where $s\in\left[
1,2_{\alpha}^{\ast}\right)  $.
\end{enumerate}

The theorem on the existence of an optimal process to problem $\left(
\ref{optymalne}\right)  $ can now be formulated.

\begin{theorem}
If the functions $G$ and $\Phi$ satisfy conditions $\left(  C1\right)
-\left(  C7\right)  ,$ then optimal control problem $\left(  \ref{optymalne}%
\right)  $ possesses at least one optimal process $\left(  z_{u^{\ast}%
},u^{\ast}\right)  \in Y^{\alpha/2}\times\mathcal{U}_{\lambda}%
.\label{twierdzenie5.1}$
\end{theorem}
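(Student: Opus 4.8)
The plan is to establish existence of a minimizer by the direct method of the calculus of variations, with the upper semicontinuity of the solution map furnished by Corollary \ref{Cor4.2} serving as the central compactness ingredient. First I would record that $\mathcal{A}\neq\emptyset$ by Corollary \ref{Cor4.2}, and that $J$ is bounded on $\mathcal{A}$. Indeed, for every admissible pair $\left(z,u\right)$ the shifted state $w=z-v$ belongs to the set $W_{u,v}$ of mountain-pass critical points, which coincides with the critical set $W_{k}$ of Lemma \ref{lemat12} since $F_{u,v}$ and $\mathcal{F}_{k}$ differ only by an additive constant; hence $w$ lies in the ball $B_{\rho}$ of Lemma \ref{lemat12}. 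Consequently $\left\Vert z\right\Vert _{Y^{\alpha/2}}$ is bounded uniformly over $\mathcal{A}$, and together with the growth bound $\left(C7\right)$ and the embedding $\left(\ref{embeding}\right)$ this yields a uniform bound on $\left\vert J\left(z,u\right)\right\vert$. Thus $m:=\inf_{\left(z,u\right)\in\mathcal{A}}J\left(z,u\right)$ is finite, and I would fix a minimizing sequence $\left(z_{k},u_{k}\right)\in\mathcal{A}$ with $J\left(z_{k},u_{k}\right)\rightarrow m$.

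Next I would extract a limit control. The family $\mathcal{U}_{\lambda}$ consists of functions sharing the Lipschitz constant $\lambda$ and taking values in the compact set $U$, so the $u_{k}$ are equibounded and equicontinuous on $\overline{\Omega}$; by the Arzel\`{a}--Ascoli theorem a subsequence (not relabelled) converges uniformly to some $u^{\ast}$, and since both the Lipschitz bound and the constraint $u\left(x\right)\in U$ pass to uniform limits, $u^{\ast}\in\mathcal{U}_{\lambda}$. Uniform convergence on the bounded $\Omega$ gives $u_{k}\rightarrow u^{\ast}$ in $L^{\infty}$. Here I would also observe that $\mathcal{U}_{\lambda}\subset\mathcal{U}$, since $U$ compact forces a uniform $L^{\infty}$ bound, so that the hypotheses of Theorem \ref{twierdzenie11} and Corollary \ref{Cor4.2} are applicable with the constant boundary data $v_{k}\equiv v$.

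With $u_{k}\rightarrow u^{\ast}$ in $L^{\infty}$ and $v_{k}\equiv v$ trivially convergent in $Y^{\alpha/2}$, Corollary \ref{Cor4.2} applies directly: the states $z_{k}\in\mathcal{S}_{u_{k},v}^{z}$ have shifts $w_{k}=z_{k}-v$ relatively compact in $X_{0}^{\alpha/2}$, and $\mathrm{Lim}\sup\mathcal{S}_{u_{k},v}^{z}\subset\mathcal{S}_{u^{\ast},v}^{z}$. Passing to a further subsequence I obtain $w_{k}\rightarrow w^{\ast}$ in $X_{0}^{\alpha/2}$, hence $z_{k}\rightarrow z^{\ast}:=w^{\ast}+v$ in $Y^{\alpha/2}$; the upper-limit inclusion then forces $z^{\ast}\in\mathcal{S}_{u^{\ast},v}^{z}$, so that $\left(z^{\ast},u^{\ast}\right)\in\mathcal{A}$ is admissible. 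This admissibility of the limit is the step I expect to be the main obstacle, and it is exactly where the stability machinery of Section \ref{mainresult} is indispensable: without the $\mathrm{Lim}\sup$ inclusion one could only conclude that $z^{\ast}$ solves the equation for $u^{\ast}$, not that it is the mountain-pass solution realizing the prescribed critical value.

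Finally I would pass to the limit in the cost. Since $z_{k}\rightarrow z^{\ast}$ in $Y^{\alpha/2}$ yields $z_{k}\rightarrow z^{\ast}$ in $L^{s}\left(\Omega\right)$ through the compact embedding $\left(\ref{embeding}\right)$, and $u_{k}\rightarrow u^{\ast}$ in $L^{\infty}$ (hence, along a subsequence, a.e.), conditions $\left(C6\right)$ and $\left(C7\right)$ together with the Krasnoselskii theorem on continuity of the Nemytskii operator give $J\left(z_{k},u_{k}\right)\rightarrow J\left(z^{\ast},u^{\ast}\right)$. Therefore $J\left(z^{\ast},u^{\ast}\right)=m=\min_{\left(z,u\right)\in\mathcal{A}}J\left(z,u\right)$, and $\left(z^{\ast},u^{\ast}\right)$ is the desired optimal process.
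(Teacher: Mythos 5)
Your proposal is correct and takes essentially the same route as the paper's proof: Arzel\`{a}--Ascoli compactness of $\mathcal{U}_{\lambda}$ to extract the limit control, Corollary \ref{Cor4.2} (applied with fixed boundary data and $u_{k}\rightarrow u^{\ast}$ in $L^{\infty}$) to obtain convergence of the states and admissibility of the limit pair, and continuity of $J$ via $\left(C6\right)$--$\left(C7\right)$. The additional details you supply --- finiteness of the infimum via Lemma \ref{lemat12} and $\left(C7\right)$, the inclusion $\mathcal{U}_{\lambda}\subset\mathcal{U}$, and the explicit observation that the $\mathrm{Lim}\sup$ inclusion is what guarantees the limit state realizes the prescribed mountain-pass critical value --- are steps the paper leaves implicit but fully consistent with its argument.
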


\begin{proof}
By $\left(  C6\right)  $ and $\left(  C7\right)  $ the cost functional is
well-defined and continuous with respect to $\left(  z,u\right)  \in
\mathbb{R}\times U$ variables. Let $\left(  z_{k},u_{k}\right)  ,$
$k\in\mathbb{N}$ be a minimizing sequence for problem $\left(  \ref{5.1}%
\right)  -\left(  \ref{5.2}\right)  $, i.e. $u_{k}\in\mathcal{U}_{\lambda},$
the equation $(-\Delta)^{\alpha/2}z_{k}\left(  x\right)  =G_{z}\left(
x,z_{k}\left(  x\right)  ,u_{k}\left(  x\right)  \right)  $ is satisfied in a
weak sense cf. $\left(  \ref{weak}\right)  $ and moreover $z_{k}=v$ in
$\mathbb{R}^{n}\backslash\Omega$ and%
\[
\lim_{k\rightarrow\infty}J\left(  z_{k},u_{k}\right)  =\inf_{\left(
z,u\right)  \in\mathcal{A}}J\left(  z,u\right)  .
\]
Entire class $\mathcal{U}_{\lambda}$ is equicontinuous and uniformly bounded,
so certainly $\left\{  u_{k}\right\}  $ is such as well. By
Arz\'{e}la-Ascoli's Theorem, there exists a subsequence $\left\{
u_{k}\right\}  $ such that $u_{k\text{ }}\rightarrow u_{0\text{ }}$uniformly
on $\Omega$ and $u_{0}\in\mathcal{U}_{\lambda}.$ By Corollary \ref{Cor4.2} the
sequence $\left\{  z_{k}\right\}  $, or at least some of its subsequence,
tends to $z_{0}$ in $Y^{\alpha/2}$ and $\left(  z_{0},u_{0}\right)
\in\mathcal{A},$ thus $J\left(  z_{0},u_{0}\right)  =\inf_{\left(  z,u\right)
\in\mathcal{A}}J\left(  z,u\right)  .$ It means that the process $\left(
z_{0},u_{0}\right)  $ is optimal for $\left(  \ref{optymalne}\right)  .$
\end{proof}

Analogous result for another class of controls $\mathcal{U}_{\Omega\left(
r\right)  }$ can be proved.\ By $\Omega\left(  r\right)  $\ we denote a fixed
decomposition of $\Omega$\ on $r$\ open subsets $\Omega_{i}$\ such that
$\bigcup\nolimits_{i=1}^{r}\Omega_{i}\subset\Omega$, $\mu\left(  \bigcup
_{i=1}^{r}\Omega_{i}\right)  =\mu\left(  \Omega\right)  $\ and $\Omega_{i}%
\cap\Omega_{j}=\emptyset$\ for $i\neq j,$\ $i,j=1,...,r$ with $\mu$ being the
Lebesgue measure on $\mathbb{R}^{n}.$\newline We shall say that a function
$u$\ is constant on $\Omega\left(  r\right)  $\ if $u$\ is constant on each
subset from decomposition $\Omega\left(  r\right)  ,$\ i.e. $u\left(
x\right)  =\mathrm{const}_{i}$\ for $x\in\Omega_{i},$\ $i=1,...,r.$ Finally
for the following class of controls
\[
\mathcal{U}_{\Omega\left(  r\right)  }=\left\{  u\in L^{\infty}\left(
\Omega,\mathbb{R}^{m}\right)  :u\left(  x\right)  \in U\text{ and }u\text{ is
constant on }\Omega\left(  r\right)  \right\}
\]
where $U$\ is a compact subset of $\mathbb{R}^{m},$ analogously to the proof
of Theorem \ref{twierdzenie5.1} one can prove the following theorem.

\begin{theorem}
If the functions $G$\ and $\Phi$\ satisfy conditions $\left(  C1\right)
-\left(  C7\right)  ,$\ then optimal control problem $\left(  \ref{optymalne}%
\right)  $\ possesses at least one optimal process $\left(  z_{u^{\ast}%
},u^{\ast}\right)  \in Y^{\alpha/2}\times\mathcal{U}_{\Omega\left(  r\right)
}.\label{twierdzenie5.2}$
\end{theorem}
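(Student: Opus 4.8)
The plan is to follow the proof of Theorem \ref{twierdzenie5.1} almost verbatim, changing only the compactness argument used to extract a convergent subsequence of admissible controls. First I would note that conditions $\left(C6\right)$ and $\left(C7\right)$ guarantee, exactly as in the previous theorem, that the cost functional $J$ in $\left(\ref{5.2}\right)$ is well defined and continuous in $\left(z,u\right)$. Since the admissible set $\mathcal{A}$ is nonempty by Corollary \ref{Cor4.2}, a minimizing sequence $\left(z_k,u_k\right)$ exists, with $u_k\in\mathcal{U}_{\Omega(r)}$, each $z_k$ a weak solution of $\left(\ref{5.1}\right)$ satisfying $\left(\ref{5.1b}\right)$ and corresponding to the critical value $\mathbf{c}_k$ of $F_{u_k,v}$, and $\lim_{k\to\infty}J\left(z_k,u_k\right)=\inf_{\mathcal{A}}J$.

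The only genuinely new ingredient is the extraction of a limit control. Here I would exploit the fact that every $u\in\mathcal{U}_{\Omega(r)}$ is completely determined by the finite tuple of its constant values $\left(\mathrm{const}_1,\ldots,\mathrm{const}_r\right)\in U^r$ on the fixed decomposition $\Omega(r)$. Since $U$ is a compact subset of $\mathbb{R}^m$, the product $U^r$ is compact, so the sequence of tuples associated with $\left\{u_k\right\}$ has a convergent subsequence; I denote its limit by $\left(\mathrm{const}_1^0,\ldots,\mathrm{const}_r^0\right)$ and let $u_0$ be the function equal to $\mathrm{const}_i^0$ on $\Omega_i$. Because $U$ is closed, $u_0$ takes values in $U$ and is constant on $\Omega(r)$, hence $u_0\in\mathcal{U}_{\Omega(r)}$. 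Moreover, as the decomposition is finite and fixed, $\left\Vert u_k-u_0\right\Vert_{L^\infty}=\max_{1\le i\le r}\left\vert\mathrm{const}_i^k-\mathrm{const}_i^0\right\vert\to0$, so $u_k\to u_0$ in $L^\infty$, replacing the Arz\'ela--Ascoli step used for the equicontinuous class $\mathcal{U}_\lambda$ in Theorem \ref{twierdzenie5.1}.

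With $L^\infty$-convergence of the controls in hand, and the boundary datum $v$ kept fixed so that trivially $v_k=v_0=v$, Corollary \ref{Cor4.2} applies and yields, along a further subsequence, that $z_k\to z_0$ in $Y^{\alpha/2}$ with $z_0$ a weak solution corresponding to the critical value of $F_{u_0,v}$; thus $\left(z_0,u_0\right)\in\mathcal{A}$. Finally, the continuity of $J$ gives $J\left(z_0,u_0\right)=\lim_{k\to\infty}J\left(z_k,u_k\right)=\inf_{\mathcal{A}}J$, so $\left(z_0,u_0\right)$ is the sought optimal process.

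I expect no serious obstacle: the substance of the argument is carried by the continuous-dependence statement of Corollary \ref{Cor4.2}, and the substitution of finite-dimensional compactness of $U^r$ for the Arz\'ela--Ascoli argument is the natural adaptation. The only point requiring a little care is verifying that the limit $u_0$ remains in $\mathcal{U}_{\Omega(r)}$ and that the tuple-convergence indeed upgrades to $L^\infty$-convergence; both follow immediately from the compactness (hence closedness) of $U$ together with the finiteness of the fixed decomposition $\Omega(r)$.
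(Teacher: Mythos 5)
Your proposal is correct and matches the paper's intended argument: the paper gives no separate proof of Theorem \ref{twierdzenie5.2}, stating only that it follows ``analogously to the proof of Theorem \ref{twierdzenie5.1},'' and the natural adaptation is exactly yours --- replace the Arz\'{e}la--Ascoli step by compactness of $U^{r}\subset\mathbb{R}^{mr}$ for the tuples of constant values, note that closedness of $U$ and finiteness of the fixed decomposition $\Omega\left(r\right)$ give $u_{0}\in\mathcal{U}_{\Omega\left(r\right)}$ and $u_{k}\rightarrow u_{0}$ in $L^{\infty}$, and then invoke Corollary \ref{Cor4.2} with $v$ fixed. Your closing remarks on the two points needing care are also the right ones, so nothing is missing.
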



\begin{thebibliography}{99}                                                                                               %




\bibitem {App}D. Applebaum, L\'{e}vy processes - from probability to finance
and quantum groups, Notices Amer. Math. Soc., \textbf{51} (2004), 1336--1347.

\bibitem {Aro}N. Aronszajn, Boundary values of functions with finite Dirichlet
integral, Techn. Report of Univ. of Kansas, \textbf{14} (1955), 77--94.

\bibitem {AubFra}J-P. Aubin and H. Frankowska, \textsl{Set-Valued Analysis},
Birkh\"{a}user, Boston, 2009.

\bibitem {AutPuc}G. Autuori and P. Pucci, Elliptic problems involving the
fractional Laplacian in $\mathbb{R}^{N}$, J. Differential Equations,
\textbf{255 }(2013), 2340--2362.

\bibitem {BarColPabSan}B. Barrios, E. Colorado, A. de Pablo, and U. Sanchez,
On some critical problems for the fractional Laplacian operator,\ J.
Differential Equations, \textbf{252} (2012), 6133--6162.

\bibitem {BerSag}A. Bermudez and C. Saguez, Optimal control of a Signorini
problem, SIAM J. Control Optim., \textbf{25} (1987), 576--582.

\bibitem {BogByc1}K. Bogdan and T. Byczkowski, Potential theory for the
$\alpha$-stable Schr\"{o}dinger operator on bounded Lipschitz domain, Studia
Math., \textbf{133} (1999), 53--92.

\bibitem {BogByc2}K. Bogdan and T. Byczkowski, Potential theory of
Schr\"{o}dinger operator based on fractional Laplacian, Probab. Math.
Statist., \textbf{20} (2000), 293--335.

\bibitem {BogBycKul}K. Bogdan, T. Byczkowski, T. Kulczycki, M. Ryznar, R.
Song, and Z. Vondracek, \textsl{Potential Theory of Stable Processes and its
Extensions}, Lecture Notes in Mathematics 1980, Springer, Berlin, Heidelberg, 2009.

\bibitem {BonVaz}M. Bonforte and J. L. V\'{a}zquez, A\ priori estimates for
fractional nonlinear degenerate diffusion equations on bounded domains, Arch.
Rational Mech. Anal., \textbf{218} (2015), 317--362.

\bibitem {Bor1}D. Bors, Superlinear elliptic systems with distributed and
boundary controls, Control Cybernet., \textbf{34} (2005), 987-1004.

\bibitem {Bor2}D. Bors, Stability of nonlinear Dirichlet BVPs governed by
fractional Laplacian, Sci. World J., \textbf{920537} (2014), pp. 10.

\bibitem {Bor3}D. Bors, Global solvability of Hammerstein equations with
applications to BVP involving fractional Laplacian, Abstr. Appl. Anal.,
\textbf{240863} (2013), pp. 10.

\bibitem {BraColPabSan}C. Br\"{a}ndle, E. Colorado, A. de Pablo, and U.
Sanchez, A concave-convex elliptic problem involving the fractional
Laplacian,\ Proc. Roy. Soc. Edinburgh Sect. A, \textbf{143} (2013), 39--71.



\bibitem {CabSir1}X. Cabr\'{e} and Y. Sire, Nonlinear equations for fractional
Laplacians I: Regularity, maximum principles, and Hamiltonian estimates, Ann.
I.H.Poincar\'{e}, \textbf{31} (2014), 23--53.

\bibitem {CabSir2}X. Cabr\'{e} and Y. Sire, Nonlinear equations for fractional
Laplacians II: Existence, uniqueness, and qualitative properties of solutions,
Trans. Amer. Math. Soc., \textbf{367} (2015), 911--941.

\bibitem {CabTan}X. Cabr\'{e} and J. Tan, Positive solutions of nonlinear
problems involving the square root of the Laplacian,\ Adv. Math., \textbf{224}
(2010), 2052--2093.

\bibitem {Caf}L. A. Caffarelli, Further regularity for the Signorini problem,
Comm. Partial Differential Equations, \textbf{4} (1979), 1067--1075.

\bibitem {CafSalSil}L. A. Caffarelli, S. Salsa, and L. Silvestre, Regularity
estimates for the solution and the free boundary of the obstacle problem for
the fractional Laplacian, Invent. Math., \textbf{171} (2008), 425--461.

\bibitem {CafSil}L. A. Caffarelli and L. Silvestre, An extension problem
related to the fractional Laplacian, Comm. Partial Differential Equations,
\textbf{32} (2007), 1245--1260.

\bibitem {CafVas}L. A. Caffarelli and A. Vasseur, Drift diffusion equations
with fractional diffusion and the quasi-geostrophic equation, Ann. of Math.
(2), \textbf{171} (2010), 1903--1930.

\bibitem {Cap}A. Capella, Solutions of a pure critical exponent problem
involving the half-Laplacian in annular-shaped domains, Commun. Pure Appl.
Anal., \textbf{10} (2011) 1645--1662.

\bibitem {CheSon}Z.-Q. Chen and R. Song, Two-sided eigenvalue estimate for
subordinate Brownian motion in bounded domain, J. Funct. Anal., \textbf{226}
(2005), 90--113.

\bibitem {NezPalVal}E. Di Nezza, G. Palatucci, and E. Valdinoci, Hitchhiker's
guide to the fractional Sobolev spaces,\ Bull. Sci. Math., \textbf{136}
(2012), 521--573.



\bibitem {FelKasVoi}M. Felsinger, M. Kassmann, and P. Voigt, The Dirichlet
problem for nonlocal operators, Math. Z., \textbf{279} (2015), 779--809.

\bibitem {FerGueZha}M. Ferrara, L. Guerrini, and B. Zhang, Multiple solutions
for perturbed non-local fractional Laplacian equations, Electron. J.
Differential Equations, \textbf{2013 }(2013), 1--10.

\bibitem {FisSerVal}A. Fiscella, R. Servadei, and E. Valdinoci, A resonance
problem for non-local elliptic operators, Z. Anal. Anwend., \textbf{32}
(2013), 411--431.

\bibitem {Gag}E. Gagliardo, Propriet\`{a} di alcune classi di funzioni in
pi\`{u} variabili, Ric. Mat., \textbf{7} (1958), 102--137.





\bibitem {IdcRog}D. Idczak and A. Rogowski, On a generalization of
Krasnoselskii's theorem, J. Austral. Math. Soc. Ser. B, \textbf{72 }(2002), 389-394.



\bibitem {KulSta}T. Kulczycki and R. Sta\'{n}czy, Multiple solutions for
Dirichlet nonlinear BVPs involving fractional Laplacian, Discrete Contin. Dyn.
Syst. Ser. B, \textbf{19} (2014), 2581--2591.



\bibitem {MacStr}J. Macki and A. Strauss, \textsl{Introduction to Optimal
Control Theory}, Springer, New York, 1982.



\bibitem {MawWil}J. Mawhin and M. Willem, \textsl{Critical Point Theory and
Hamiltonian Systems,} Springer, New York, 1989.



\bibitem {MolSer}G. Molica Bisci and R. Servadei, A bifurcation result for
non-local fractional equations, Anal. Appl., \textbf{13}, 371 (2015) DOI: 10.1142/S0219530514500067.

\bibitem {NocOtaSal}R. H. Nocheto, E. Ot\'{a}rola, and A. J. Salgado, A
PDE\ approach to fractional diffusion in general domains: a priori error
analysis, arXiv:1302.0698

\bibitem {Rab}P. H. Rabinowitz, \textsl{Minimax Methods in Critical Point
Theory with Applications to Differential Equations}, CBMS Reg. Conf. Ser.
Math. 65, Amer. Math. Soc., 1986.

\bibitem {RosSer1}X. Ros-Oton and J. Serra, The Dirichlet problem for the
fractional Laplacian: regularity up to the boundary,\ J. Math. Pures Appl.,
\textbf{101} (2014), 275-302.

\bibitem {RosSer2}X. Ros-Oton and J. Serra, Fractional Laplacian: Pohozhaev
identity and nonexistence results,\ C. R. Math. Acad. Sci. Paris, \textbf{350
}(2012), 505--508.

\bibitem {Ser}R. Servadei, Infinitely many solutions for fractional Laplace
equations with subcritical nonlinearity, Contemp. Math., \textbf{595} (2013), 317--340.

\bibitem {SerVal}R. Servadei and E. Valdinoci, Variational methods for
non-local operators of elliptic type, Discrete Contin. Dyn. Syst., \textbf{33}
(2013), 2105--2137.

\bibitem {SerVal2}R. Servadei and E. Valdinoci, Mountain Pass solutions for
non-local elliptic operators, J. Math. Anal. Appl., \textbf{389 }(2012), 887--898.

\bibitem {SerVal3}R. Servadei and E. Valdinoci, Lewy-Stampacchia type
estimates for variational inequalities driven by $($non$)$local operators,
Rev. Mat. Iberoam., \textbf{29} (2013), 1091--1126.

\bibitem {SerVal4}R. Servadei and E. Valdinoci, On the spectrum of two
different fractional operators, Proc. Roy. Soc. Edinburgh Sect. A.,
\textbf{144 (}2014), 831--855.

\bibitem {SerVal5}R. Servadei and E. Valdinoci, Weak and viscosity solutions
of the fractional Laplace equation, Publ. Mat. (2014), 133--154.

\bibitem {Slo}L. N. Slobodeckij, Generalized Sobolev spaces and their
applications to boundary value problems of partial differential equations,
Gos. Ped. Inst. U\v{c}ep. Zap. Leningrad, \textbf{197} (1958), 54--112.

\bibitem {Str}M. Struwe, \textsl{Variational Methods. Applications to
Nonlinear Partial Differential Equations and Hamiltonian Systems}, Springer,
Berlin, Heidelberg, 1990.

\bibitem {Vaz}J. L. V\'{a}zquez, \textsl{Nonlinear Diffusion with Fractional
Laplacian Operators: Nonlinear Partial Differential Equations: the Abel
Symposium 2010}, Holden, Helge \& Karlsen, Kenneth H. (Eds.), Springer (2012), 271--298.

\bibitem {Vaz2}J. L. V\'{a}zquez, Recent progress in the theory of nonlinear
diffusion with fractional Laplacian operators, Discrete Contin. Dyn. Syst.
Ser. S, \textbf{7 }(2014), 857--885.

\bibitem {Val}E. Valdinoci, From the long jump random walk to the fractional
Laplacian, Bol. Soc. Esp. Mat. Apl. SeMA, \textbf{49} (2009), 33--44.


\end{thebibliography}
\end{document}